\def\cE{\mathcal{E}}
\def\cF{\mathcal{F}}
\def\cG{\mathcal{G}}
\def\cH{\mathcal{H}}
\def\cJ{\mathcal{J}}
\def\trace{\ensuremath{traces}}
\def\tracek{\ensuremath{\trace_k}}
\def\Tenum{\textsc{Trans-Enum}\xspace}
\def\Thyp{\textsc{Trans-Hyp}\xspace}
\def\VCdim{\textsc{VC-dim}\xspace}
\def\ext{ext}
\def\extk{\ext_{k}}
\newtheorem{thm}{Theorem}
\newtheorem{theorem}[thm]{Theorem}
\newtheorem{lemma}[thm]{Lemma}
\newtheorem{corollary}[thm]{Corollary}
\newtheorem{proposition}[thm]{Proposition}
\title{Enumeration of minimal transversals of hypergraphs of bounded VC-dimension}
\author{A. Mary}
\begin{document}
\maketitle
\begin{abstract}
	We consider the problem of enumerating all minimal transversals (also called minimal hitting sets) of a hypergraph $\cH$. An equivalent formulation of this problem known as the \emph{transversal hypergraph} problem (or \emph{hypergraph dualization} problem) is to decide, given two hypergraphs, whether one corresponds to the set of minimal transversals of the other. The existence of a polynomial time algorithm to solve this problem is a long standing open question. In \cite{fredman_complexity_1996}, the authors present the first sub-exponential algorithm to solve the transversal hypergraph problem which runs in quasi-polynomial time, making it unlikely that the problem is (co)NP-complete.

	In this paper,  we show that when one of the two hypergraphs is of bounded VC-dimension, the transversal hypergraph problem can be solved in polynomial time, or equivalently that if $\cH$ is a hypergraph of bounded VC-dimension, then there exists an incremental polynomial time algorithm to enumerate its minimal transversals.
	This result generalizes most of the previously known polynomial cases in the literature since they almost all consider classes of hypergraphs of bounded VC-dimension.
	As a consequence, the hypergraph transversal problem is solvable in polynomial time for any class of hypergraphs closed under partial subhypergraphs.
	We also show that the proposed algorithm runs in quasi-polynomial time in general hypergraphs and runs in polynomial time if the conformality of the hypergraph is bounded, which is one of the few known polynomial cases where the VC-dimension is unbounded.
\end{abstract}

\section{Introduction}
A hypergraph $\cH$ is a couple $(V(\cH),\cE(\cH))$ where $V(\cH)$ is a finite set called the \emph{vertices} and  $\cE(\cH) \subseteq 2^{V(\cH)}$ is a family of subsets of $V(\cH)$ called the \emph{hyperedges} of the hypergraph. By abuse of notation, we often treat a hypergraph as its set of hyperedges when the set of vertices is clear from the context, and we say that a set of vertices $X$ belongs to $\cH$ if $X$ is a hyperedge of $\cH$.
A set $U\subseteq V(\cH)$ is called a \emph{transversal} (or a hitting set) of $\cH$ if $U\cap F\neq \emptyset$ for all $F\in \cE(\cH)$. A transversal $U$ is said to be \emph{minimal} if it does not contain any other transversal.
The set of minimal transversals of $\cH$ forms another hypergraph on the same vertex set denoted by $Tr(\cH)$ and referred to as the \emph{transversal hypergraph} or the \emph{dual hypergraph} of $\cH$ \cite{berge_hypergraphs_1989}. In this paper we are interested in the problem of finding $Tr(\cH)$ given $\cH$.\\

\noindent\textbf{\textsc{Trans-Enum}}\\
\noindent \textbf{Input} : A hypergraph $\cH$. \\
\noindent \textbf{Output} : All minimal transversals of $\cH$, i.e. $Tr(\cH)$.\\

Since $Tr(\cH)$ could be exponentially larger than $\cH$, this problem falls into the category of enumeration problems. To measure the complexity of algorithms that solve this kind of problems, we usually take into account both the input size (the size of $\cH$) and the output size (the size of $Tr(\cH)$). With this paradigm (called the \emph{output-sensitive} approach), an algorithm is said to be \emph{output-polynomial} if its running time is a polynomial in $|\cH|$ and $|Tr(\cH)|$. We say that an algorithm runs in \emph{incremental polynomial time} if it can find $\ell$ minimal transversals in time polynomial in $|\cH|$ and $\ell$. The problem admits an incremental polynomial time algorithm if and only if the following problem can be solved in polynomial time (in a classical sense):\\

\noindent\textbf{\textsc{Trans-Hyp}}\\
\noindent \textbf{Input} : Two hypergraphs $\cH$ and $\cG$ on the same vertex set $V$ with $\cG \subseteq Tr(\cH)$. \\
\noindent \textbf{Output} : Either answer that $\cG = Tr(\cH)$ or find $T\in Tr(\cH) \setminus \cG$ \\

We assume throughout the paper that $\cH$ is \emph{Sperner} i.e. no hyperedge of $\cH$ contains another hyperedge. This assumption can be made without loss of generality since otherwise $\cH$ has the same minimal transversals as the restriction to its inclusion-wise minimal hyperedges. For Sperner hypergraphs, it is well known that $\cH$ and $Tr(\cH)$ form a duality relationship in the sense that $Tr(\cH)=\cG$ if and only if $Tr(\cG) = \cH$ \cite{berge_hypergraphs_1989}. The \textsc{Trans-Hyp} problem corresponds to deciding whether two hypergraphs are dual, and to finding a counter-example otherwise. It has been shown in \cite{bioch_complexity_1995} that the simple decision version of this problem (without requiring a counter-example) is equivalent to \textsc{Trans-Hyp}.

The \textsc{Trans-Hyp} problem has been extensively studied due to its equivalence to many other important problems (see e.g. \cite{bioch_complexity_1995,
	goos_hypergraph_2002,eiter_computational_2008,fiat_output-sensitive_2009,
	gunopulos_data_1997,
	khachiyan_dualization_2007,elbassioni_global_2019,takata_worst-case_2008}). It is a long standing open question to decide whether \textsc{Trans-Hyp} can be solved in polynomial time. The best known algorithms to solve it run in quasi-polynomial time $nN^{o(\log N)}$ where $N=|\cH| + |\cG|$ and $n=|V(\cH)|$ \cite{fredman_complexity_1996}. The problem is then very unlikely to be NP-hard.
The VC-dimension of a hypergraph was introduced in \cite{vapnik_uniform_1971} and has been shown to be an important parameter for many different applications.
Given a class of hypergraph $\mathscr{H}$, the \Thyp problem restricted to $\mathscr{H}$ consists in all instances $(\cH, \cG)$ of \Thyp such that at least one of $\cH$ or $\cG$ belongs to $\mathscr{H}$.
The main result of this paper is that \Thyp can be solved in polynomial time in hypergraphs of bounded VC-dimension, or equivalently, that \Tenum can be solved in incremental polynomial time for hypergraphs of bounded VC-dimension.  This answers an open question proposed in \cite{10.1145/3173127.3173138}.

\begin{restatable}{Theorem}{mainthm}
	\label{thm:main}
	Let $\cH$ be a hypergraph with $\VCdim(\cH)<k$ and let $\cG \subseteq Tr(\cH)$. One can decide in time $O(2^k(n|\cG|)^{k+1} + n^{2k+1}|\cG|) $ whether $\cG = Tr(\cH)$ and find $T\in Tr(\cH) \setminus  \cG$ otherwise.
\end{restatable}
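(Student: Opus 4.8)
\medskip
\noindent\emph{Proof outline.}
The starting point is the usual reformulation of \Thyp as a search problem. Since $\cH$ is Sperner and $\cG\subseteq Tr(\cH)$ (so $\cG$ is Sperner, and by duality every hyperedge of $\cH$ meets every member of $\cG$), one has $\cG=Tr(\cH)$ if and only if there is \emph{no} set $X\subseteq V$ that is simultaneously a transversal of $\cH$ and $\cG$\emph{-independent}, meaning $G\not\subseteq X$ for all $G\in\cG$. Indeed, from such an $X$ one produces a witness in $Tr(\cH)\setminus\cG$ by deleting vertices of $X$ one at a time while it remains a transversal of $\cH$: the resulting $T$ is a minimal transversal, hence not a proper superset of any $G\in\cG$, while $T\subseteq X$ rules out $T\supseteq G$ as well, so $T\notin\cG$; conversely any $T\in Tr(\cH)\setminus\cG$ is itself such an $X$. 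The minimalization costs $O(n)$ transversal tests, absorbed by the stated bound, so it suffices to look for a $\cG$-independent transversal of $\cH$ or to certify that none exists.

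The hypothesis $\VCdim(\cH)<k$ is used to turn this into a bounded-width search. By the Sauer--Shelah lemma, $\cH$ has at most $\sum_{i<k}\binom{n}{i}$ hyperedges (polynomial in $n$), every minimal transversal of $\cH$ has size at most $|\cH|$ because each of its vertices owns a private hyperedge, and, crucially, for every $S\subseteq V$ with $|S|\le k$ the trace $\{E\cap S:E\in\cE(\cH)\}$ has fewer than $2^{|S|}\le 2^k$ members since $S$ is not shattered; consequently two subsets of $V$ that agree on the trace of every $k$-set are indistinguishable to $\cH$. I would begin with an $O(n^{2k+1}|\cG|)$ preprocessing pass that tabulates the $k$-traces $\tracek(\cH)$ together with their interaction with $\cG$, i.e.\ for each small candidate set the induced trace and the $k$-extension data $\extk$ recording which hyperedges would remain unhit and which members of $\cG$ would be in danger of being contained.

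The core is then a recursive procedure maintaining a partial solution (a set of vertices committed to $X$, with the invariant that we seek a $\cG$-independent transversal of $\cH$ extending it), together with the structural lemma that the branching needed is of width $O(n|\cG|)$ and depth at most $k+1$. Concretely, when the current commitment does not yet hit some hyperedge and no completion is forced, the procedure must either add a vertex of that hyperedge or push a vertex out of $X$ to protect a member of $\cG$ that is about to be swallowed; it branches over the $O(n|\cG|)$ ways of making this next commitment, guided by the preprocessed trace data so that every successful choice is, from the point of view of the surviving constraints, equivalent to one of the branches considered. The correctness of this pruning and the bound of $k+1$ on the depth are exactly where bounded VC-dimension enters essentially: a chain of $k+1$ nested commitments, or a family of branches all of which fail while some outside choice succeeds, is converted into a $k$-set shattered by $\cH$, contradicting $\VCdim(\cH)<k$. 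A recursion tree of depth $O(k)$ and branching $O(n|\cG|)$, with $O(2^k)$ local bookkeeping per node evaluated against the preprocessed tables, has $O(2^k(n|\cG|)^{k+1})$ nodes, which together with the preprocessing yields the announced running time. Running the same algorithm with $k$ replaced by $O(\log N)$ (always a valid VC-dimension bound, $N=|\cH|+|\cG|$), respectively by the conformality parameter, gives the quasi-polynomial bound for general hypergraphs and the polynomial bound for bounded conformality mentioned in the introduction.

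I expect the main obstacle to be this structural lemma: proving that at every branching point a set of candidate commitments of size polynomial in $n$ and $|\cG|$ can be (i) computed in polynomial time from the current partial solution and the traces of $\cH$, and (ii) \emph{complete}, so that whenever some completion of the current commitment exists one of the recursive calls finds one; the completeness step must carefully select the candidates and the committed vertices so that a hypothetical missed completion forces the failures along the branches to be mutually ``independent'' in the sense needed to exhibit a shattered $k$-set, and likewise must show the recursion cannot nest more than $k+1$ times. Once the lemma and the depth bound are established, termination, overall correctness, and the exact exponent bookkeeping are routine.
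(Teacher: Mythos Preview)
Your reformulation (search for a $\cG$-independent transversal of $\cH$, equivalently for an element of $Tr(\cG)\setminus\cH$) is fine and matches the paper's viewpoint. The gap is everything after that. You propose a recursive branching procedure of width $O(n|\cG|)$ and depth $\le k+1$, justified by an unproven ``structural lemma'' to the effect that a deeper recursion or a complete set of failing branches would produce a shattered $k$-set in $\cH$. You do not prove this lemma, and as stated it is not clear it is even true: the commitments you describe (adding a vertex to hit a hyperedge of $\cH$, or excluding one to keep some $G\in\cG$ from being swallowed) are decisions about \emph{vertices}, and there is no mechanism given by which $k{+}1$ such decisions force all $2^k$ traces on some $k$-set to appear among the hyperedges of $\cH$. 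Reverse-engineering the two summands of the running time into ``preprocessing'' and ``recursion tree'' does not by itself supply that argument.

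The paper's proof is not a branching argument at all; it is a flat two-case split. If some $E\in Tr(\cG)\setminus\cH$ exists, then either $E$ is $k$-compatible with $\cH$, i.e.\ $E\in\extk(\cH)$, or $E$ realizes some $k$-trace $(T,S)\notin\tracek(\cH)$. For the second case one enumerates the at most $2^k\binom{n}{k}$ such traces and, for each, asks whether $\cG$ has a minimal transversal realizing it; this reduces to a \emph{sub-transversal} test on a subhypergraph of $\cG$ with a prescribed set $T$ of size at most $k$, solvable in $O(n|\cG|^{|T|+1})$ by the Boros--Gurvich--Hammer criterion, which accounts for the $O(2^k(n|\cG|)^{k+1})$ term. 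For the first case one uses that $\VCdim(\extk(\cH))=\VCdim(\cH)<k$, hence by Sauer--Shelah $|\extk(\cH)|=O(n^k)$; the paper shows $\extk(\cH)$ can actually be \emph{computed} in $O(n^{2k})$ by growing it one vertex at a time, and then one simply tests each of its $O(n^k)$ members for membership in $Tr(\cG)$, giving the $O(n^{2k+1}|\cG|)$ term. The role of bounded VC-dimension is thus to bound $|\extk(\cH)|$, not to bound a recursion depth. Your sketch misses both concrete ingredients (the explicit computation of $\extk(\cH)$ and the sub-transversal check applied to $\cG$), and the shattering-based depth bound you rely on is left as exactly the part that would need to be proved.
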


Many different polynomial time algorithms for $\Thyp$ have been designed for special classes of hypergraphs in the literature.
While more efficient algorithms are generally presented, Theorem \ref{thm:main} generalizes many of those results since we observe that considered classes often  have bounded VC-dimension. Among the already known polynomial cases directly covered by Theorem \ref{thm:main} we can cite:
\begin{itemize}
	\item  bounded hyperedge size, bounded edge-intersections, $\beta$-acyclic \cite{eiter_identifying_1995,goos_generating_2004,khachiyan_dualization_2007,boros_efficient_2000}
	\item $\delta$-sparse hypergraphs, bounded degree,  bounded tree-width, totally unimodular hypergraphs, balanced hypergraphs \cite{khachiyan_computing_2007,mishra_generating_1997,eiter_new_nodate,doi:10.1137/18M1198995}
	\item Several geometrically defined hypergraphs: axis-parallel hyper-rectangles, half-spaces, axis-parallel hyperplanes, balls, polytopes with fixed number of facets/vertices \cite{fiat_output-sensitive_2009,elbassioni_global_2019}
\end{itemize}

We observe that all those classes are closed under partial subhypergraphs. A partial subhypergraph of a hypergraph $\cH$ is a hypergraph obtained from $\cH$ by selecting a subset of hyperedges $\cE'\subseteq \cE(\cH)$, a subset of vertices $V'\subseteq V(\cH)$ and considering the hyperedges of $\cE'$ restricted to $V'$, i.e. the hypergraph $(V',\{F\cap V' \mid F\in \cE'\}$. As an important corollary of Theorem \ref{thm:main} we obtain the following.
\begin{restatable}{corollary}{closedHyper}\label{cor:closed}
	\Thyp can be solved in polynomial time in any proper class of hypergraphs closed under partial subhypergraph.
\end{restatable}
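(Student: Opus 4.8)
The plan is to deduce Corollary~\ref{cor:closed} from Theorem~\ref{thm:main}. The whole content is the observation that a \emph{proper} class $\mathscr{H}$ of hypergraphs that is closed under partial subhypergraphs automatically has uniformly bounded VC-dimension; once this is established, Theorem~\ref{thm:main} (together with the self-duality of \Thyp) does the rest.

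For the key step, suppose toward a contradiction that $\mathscr{H}$ contains hypergraphs of arbitrarily large VC-dimension. Fix $d\in\bN$ and pick $\cH_d\in\mathscr{H}$ together with a set $S\subseteq V(\cH_d)$, $|S|=d$, shattered by $\cH_d$; by definition, for each $T\subseteq S$ there is a hyperedge $F_T\in\cE(\cH_d)$ with $F_T\cap S=T$. Selecting the vertex set $S$ and the hyperedge set $\{F_T:T\subseteq S\}$ and restricting to $S$ produces, as a partial subhypergraph of $\cH_d$, exactly the power-set hypergraph $K_d:=(S,2^{S})$; hence $K_d\in\mathscr{H}$. But every hypergraph on at most $d$ vertices is, up to renaming its vertices, a partial subhypergraph of $K_d$: keep all $d$ vertices and select the desired subfamily of $2^{S}$ as the edge set. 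Since $d$ is arbitrary and every hypergraph has finitely many vertices, $\mathscr{H}$ would then contain every hypergraph, contradicting that $\mathscr{H}$ is proper. Therefore there is a constant $k=k(\mathscr{H})$ with $\VCdim(\cH)<k$ for all $\cH\in\mathscr{H}$.

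It remains to solve \Thyp in polynomial time on instances $(\cH,\cG)$ with $\cG\subseteq Tr(\cH)$ and $\{\cH,\cG\}\cap\mathscr{H}\neq\emptyset$. If $\cH\in\mathscr{H}$ then $\VCdim(\cH)<k$ and Theorem~\ref{thm:main} already solves the instance in time $O(2^k(n|\cG|)^{k+1}+n^{2k+1}|\cG|)$, which is polynomial since $k$ is a fixed constant. If instead $\cG\in\mathscr{H}$, I would pass to the dual instance: since $\cG\subseteq Tr(\cH)$, every hyperedge of $\cH$ is a transversal of $\cG$, and checking in polynomial time whether each of them is moreover \emph{minimal} decides whether $\cH\subseteq Tr(\cG)$; if this fails then $\cH\neq Tr(\cG)$ and hence $\cG\neq Tr(\cH)$, while otherwise $(\cG,\cH)$ is a legal \Thyp instance with $\VCdim(\cG)<k$, so Theorem~\ref{thm:main} decides whether $\cH=Tr(\cG)$, equivalently (by Sperner duality) whether $\cG=Tr(\cH)$. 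This settles the decision version of \Thyp for all instances meeting the class; the witness $T\in Tr(\cH)\setminus\cG$ in the negative case is then recovered by the polynomial search-to-decision reduction of \cite{bioch_complexity_1995}, whose decision queries only involve partial subhypergraphs of $(\cH,\cG)$, which still have one side of VC-dimension below $k$ since VC-dimension is non-increasing under partial subhypergraphs. The only genuine obstacle is the second paragraph --- the point that shattering a set of size $d$ forces the full power-set hypergraph $K_d$ to occur as a partial subhypergraph, which is precisely what turns ``unbounded VC-dimension'' into ``contains everything''; the remainder is routine duality bookkeeping.
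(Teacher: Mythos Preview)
Your argument is correct and matches the paper's approach: the paper proves exactly your second paragraph as Lemma~\ref{lem:universalvc} and Proposition~\ref{prop:partial-vc} (a shattered $d$-set yields the full power-set hypergraph on $d$ vertices as a partial subhypergraph, hence a proper closed class has uniformly bounded VC-dimension), and then invokes Corollary~\ref{cor:vc-bounded}/Theorem~\ref{thm:main}.

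One remark on your last paragraph: the detour through the Bioch--Ibaraki search-to-decision reduction is unnecessary, and your claim that its oracle queries ``only involve partial subhypergraphs of $(\cH,\cG)$'' is not obviously true as stated (the standard self-reduction grows $\cG$ rather than restricting it). The paper avoids this entirely: once $\cH\subseteq Tr(\cG)$, applying Theorem~\ref{thm:main} to the swapped pair $(\cG,\cH)$ already returns a witness $E\in Tr(\cG)\setminus\cH$, and any minimal transversal of $\cH$ contained in $V\setminus E$ is then a witness in $Tr(\cH)\setminus\cG$; the case $\cH\not\subseteq Tr(\cG)$ is handled directly at the start of Section~3. So your duality bookkeeping can be replaced by these two explicit witness conversions, with no appeal to \cite{bioch_complexity_1995}.
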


One of the most general classes for which we already know that \Thyp is solvable in polynomial time and which is not covered by Theorem \ref{thm:main} is the class of $k$-conformal hypergraphs \cite{khachiyan_dualization_2007}. While this class is not closed under partial subhypergraph and does not have a bounded VC-dimension we show that the algorithm developed in this paper runs also in polynomial time if the hypergraph is $k$-conformal.
To the best of our knowledge, the two main cases for which the polynomiality cannot be directly deduced from the results present in this paper are the class of $k$-degenerate hypergraphs \cite{eiter_new_nodate} and the class of $r$-exact hypergraphs \cite{elbassioni_polynomial-time_2010}.

\paragraph{Organization of the paper.}
Section~\ref{sec:prelim} introduces the notions of traces, $k$-compatibility and VC-dimension that we use throughout the paper. In Section~\ref{sec:mainalgo}, we present our main algorithm for \Thyp when one input hypergraph has bounded VC-dimension, and we prove its correctness, running time and some consequences. Finally, Section~\ref{sec:closed} studies hypergraph classes closed under subhypergraph relations and shows how our result implies polynomiality for classes closed under partial subhypergraphs.

\section{Preliminaries}\label{sec:prelim}

A trace on $V:=V(\cH)$ is a pair $(T,S)$ with $S\subseteq V$ and $T \subseteq S$. The size of the trace $(T,S)$ is defined as $|S|$, and it is called a $k$-trace if $|S|=k$. We say that a subset $F\subseteq V$ \emph{realizes} a trace $(T,S)$ if $F\cap S = T$, and we denote by
\[
\trace_k(F,V) := \{(F\cap S, S) \mid S\subseteq V,\ |S|=k\}
\]
the set of $k$-traces realized by $F$ on $V$.
When the set $V$ is clear from the context, we will simply use $\trace_k(F)$ instead of $\trace_k(F,V)$.

For a hypergraph $\cH$  we denote by 
\[\trace_k(\cH):= \bigcup\limits_{F\in\cH} \trace_k(F,V(\cH))\] 
the set of traces realized by its hyperedges. In other words, a $k$-trace $(T,S)$ belongs to $\trace_k(\cH)$ if there exists a hyperedge $F\in \cH$ such that $F\cap S = T$.
A subset $E\subseteq V(\cH)$ is \emph{$k$-compatible} with $\cH$ if $\trace_k(E)\subseteq \trace_k(\cH)$, i.e., for each $k$-subset $S\subseteq V(\cH)$ there exists $F\in \cH$ such that $E\cap S = F\cap S$. Given a hypergraph $\cH$ and $k\leq |V(\cH)|$, the $k$-extension of $\cH$, $\extk(\cH)$, is the hypergraph on $V(\cH)$ whose hyperedges are all subsets of $V$ that are $k$-compatible with $\cH$, i.e. $\extk(\cH):=(V(\cH), \{E \mid E \subseteq V, \trace_k(E) \subseteq \trace_k(\cH) \})$. Since in particular every hyperedge of $\cH$ is $k$-compatible with $\cH$, $\cH \subseteq \extk(\cH)$.


A subset of vertices $U \subseteq V(\cH)$ is \emph{shattered} in $\cH$ if for all $U'\subseteq U$, there exists $F\in \cH$ such that $F\cap U=U'$ i.e. if $(U',U)\in \trace_{|U|}(\cH)$ for all $U'\subseteq U$. The \emph{VC-dimension} of $\cH$, $\VCdim(\cH)$, is the size of its largest shattered set. Using the trace definition, $\VCdim(\cH) < k$ if for all $k$-subsets $S$ of $V$ there exists $T\subseteq S $  such that $(T,S)\notin \tracek(\cH)$.
One of the most important properties of hypergraphs of bounded dimension is given by the Sauer–Shelah Lemma.

\begin{lemma}[Sauer–Shelah Lemma]\label{lem:sauer}
	If $\cH$ is of VC-dimension strictly less than $k$, then $|\cH|=O(|V|^k)$.
\end{lemma}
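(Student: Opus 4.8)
The plan is to prove the standard sharp form $|\cH|\le \sum_{i=0}^{k-1}\binom{|V|}{i}$, from which $|\cH|=O(|V|^{k-1})=O(|V|^k)$ follows at once. Write $n=|V|$, and for a hypergraph on $V$ call a set $S\subseteq V$ \emph{shattered} if $(S',S)\in\trace_{|S|}(\cH)$ for every $S'\subseteq S$, exactly as in the preliminaries. The heart of the argument is the combinatorial inequality, essentially due to Pajor, that
\[
|\cH|\ \le\ \bigl|\{\,S\subseteq V : S\text{ is shattered in }\cH\,\}\bigr|.
\]
Granting this, the bound is immediate: since $\VCdim(\cH)<k$ means no set of size $\ge k$ is shattered, the right-hand side is at most the number of subsets of $V$ of size at most $k-1$, namely $\sum_{i=0}^{k-1}\binom{n}{i}$, which is $O(n^{k-1})$.

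To establish the inequality I would induct on $n=|V|$. The base case $V=\emptyset$ is trivial, as then $\cH$ is $\emptyset$ or $\{\emptyset\}$ and $\emptyset$ is shattered in the latter. For the inductive step fix an element $x\in V$, put $V'=V\setminus\{x\}$, and split $\cH$ by the presence of $x$: let $\cH_0=\{F\setminus\{x\}\mid F\in\cH\}$ be the trace of $\cH$ on $V'$, and let $\cH_1=\{F\subseteq V'\mid F\in\cH\text{ and }F\cup\{x\}\in\cH\}$. A direct count of the fibers of the map $F\mapsto F\setminus\{x\}$ gives $|\cH|=|\cH_0|+|\cH_1|$ (a set $G\subseteq V'$ has two preimages precisely when both $G$ and $G\cup\{x\}$ lie in $\cH$, i.e.\ exactly on $\cH_1$). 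Both $\cH_0$ and $\cH_1$ live on the $(n-1)$-element set $V'$, so the induction hypothesis bounds $|\cH_0|+|\cH_1|$ by the number of sets shattered in $\cH_0$ plus the number of sets shattered in $\cH_1$.

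It remains to inject these two families of shattered sets, disjointly, into the shattered sets of $\cH$. If $S\subseteq V'$ is shattered in $\cH_0$, then it is shattered in $\cH$: each $S'\subseteq S$ is realized on $S$ by some $F\setminus\{x\}\in\cH_0$, and since $x\notin S$ the edge $F\in\cH$ realizes $S'$ on $S$ as well. If $S\subseteq V'$ is shattered in $\cH_1$, then $S\cup\{x\}$ is shattered in $\cH$: for $S'\subseteq S$ pick $F\in\cH_1$ with $F\cap S=S'$; then $F\in\cH$ realizes $S'$ on $S\cup\{x\}$ and $F\cup\{x\}\in\cH$ realizes $S'\cup\{x\}$, so all subsets of $S\cup\{x\}$ occur. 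The first family consists of sets avoiding $x$ and the second of sets containing $x$, hence they are disjoint, and together they witness $|\cH_0|+|\cH_1|\le \bigl|\{S:S\text{ shattered in }\cH\}\bigr|$, closing the induction.

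I do not anticipate a real obstacle: the only points needing care are the fiber count yielding $|\cH|=|\cH_0|+|\cH_1|$ and the disjointness of the two injected families, both of which reduce to cleanly separating edges of $\cH$ according to whether they contain the chosen element $x$. (An alternative route, via the down-shift/compression operator that only decreases shattered sets while making $\cH$ a down-set, would also work, but the Pajor-style induction above is the most economical.)
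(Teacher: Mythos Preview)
Your proof is correct and is the standard Pajor-style induction yielding the sharp bound $|\cH|\le\sum_{i=0}^{k-1}\binom{n}{i}$; the fiber count and the disjoint injection of shattered sets are both handled cleanly.

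Note, however, that the paper does not actually prove this lemma: it is stated as the classical Sauer--Shelah Lemma and used as a black box (the only argument given in the paper is for the corollary that $\extk(\cH)$ inherits the same bound, by observing that $\extk(\cH)$ has the same $k$-traces as $\cH$ and hence the same VC-dimension). So there is no ``paper's own proof'' to compare against; your argument simply supplies what the paper takes for granted, and in fact gives a slightly stronger $O(|V|^{k-1})$ bound than the $O(|V|^k)$ stated.
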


\begin{corollary}\label{cor:sauer}
	If $\cH$ is of VC-dimension strictly less than $k$, then $|\extk(\cH)|=O(|V|^k)$.
\end{corollary}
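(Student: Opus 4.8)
The plan is to reduce Corollary~\ref{cor:sauer} to the Sauer--Shelah Lemma by showing that the $k$-extension does not increase the VC-dimension past the threshold, i.e.\ that $\VCdim(\extk(\cH)) < k$ whenever $\VCdim(\cH) < k$. Once this is established, applying Lemma~\ref{lem:sauer} to $\extk(\cH)$ (which is a hypergraph on the same vertex set $V$ of size $|V|$) immediately yields $|\extk(\cH)| = O(|V|^k)$.

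To prove the claim $\VCdim(\extk(\cH)) < k$, I would argue by contradiction: suppose there is a set $U \subseteq V$ with $|U| = k$ that is shattered in $\extk(\cH)$. By definition of shattering, for every $U' \subseteq U$ there exists a hyperedge $E \in \extk(\cH)$ with $E \cap U = U'$. Since $|U| = k$, the pair $(U', U) = (E \cap U, U)$ is a $k$-trace realized by $E$, so $(U', U) \in \trace_k(E)$. But membership of $E$ in $\extk(\cH)$ means precisely that $\trace_k(E) \subseteq \trace_k(\cH)$, hence $(U', U) \in \trace_k(\cH)$, i.e.\ there is a genuine hyperedge $F \in \cH$ with $F \cap U = U'$. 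As this holds for every $U' \subseteq U$, the set $U$ is shattered in $\cH$, contradicting $\VCdim(\cH) < k$. Therefore no $k$-subset of $V$ is shattered in $\extk(\cH)$, which is the definition of $\VCdim(\extk(\cH)) < k$.

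There is essentially no analytic obstacle here; the only thing to be careful about is the bookkeeping between the ``trace'' formulation of VC-dimension (stated in the Preliminaries as ``$\VCdim(\cH) < k$ iff for all $k$-subsets $S$ there is $T \subseteq S$ with $(T,S) \notin \trace_k(\cH)$'') and the ``shattered set'' formulation, and the observation that for a set of size exactly $k$ the relevant trace lives in $\trace_k$ rather than some $\trace_j$ with $j \neq k$. Both formulations are recorded in the text, so the argument is a short direct chain of inclusions $\trace_k(E) \subseteq \trace_k(\cH)$ combined with the hypothesis on $\cH$.
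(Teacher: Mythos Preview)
Your proof is correct and follows essentially the same route as the paper: both argue that $\extk(\cH)$ cannot shatter any $k$-set because its $k$-traces are contained in (in fact equal to) those of $\cH$, hence $\VCdim(\extk(\cH))<k$, and then invoke the Sauer--Shelah Lemma. The paper states the key observation more tersely (``$\extk(\cH)$ has exactly the same $k$-traces as $\cH$''), while you spell out the contradiction in detail, but the substance is identical.
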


\begin{proof}
	Since by definition, $\extk(\cH)$ has exactly the same $k$-traces as $\cH$, a subset of vertices of size $k$ is shattered in $\extk(\cH)$ if and only if it is shattered in $\cH$. Since $\cH$ shatters no subset of size $k$, $\VCdim(\extk(\cH))<k$.  The result follows from Lemma \ref{lem:sauer}.
\end{proof}

Given a hypergraph $\cH$ and a subset of hyperedges $\cE'\subseteq  \cH$, the hypergraph $( \bigcup\limits_{F \in \cE'} F ,\cE')$ is called the \emph{partial hypergraph} of $\cH$ induced by $\cE'$.  Given a subset of vertices $V'\subseteq V(\cH)$ the hypergraph $(V',\{F\cap V' \mid F\in \cH, \; F\cap V'\neq \emptyset \})$ is the \emph{subhypergraph} induced by $V'$. Given $V'\subseteq V$ and $\cE'\subseteq \cH$ the \emph{partial subhypergraph} induced by $V'$ and $\cE'$ is the hypergraph $(V',\{F\cap V' \mid F\in \cE', \; F\cap V'\neq \emptyset \})$. A partial subhypergraph of $\cH$ that does not contain isolated vertices (i.e. vertices that do not belong to any hyperedge) is a partial hypergraph of a subhypergraph of $\cH$ or equivalently a subhypergraph of a partial hypergraph of $\cH$.

\section{Main algorithm}\label{sec:mainalgo}

In this section we assume that we are given a hypergraph $\cH$ with  $\VCdim(\cH)<k$ and a hypergraph $\cG\subseteq Tr(\cH)$. We want to decide whether $Tr(\cH) = \cG$, or equivalently whether $\cG=Tr(\cH)$, and find a new minimal transversal in $Tr(\cH) \setminus \cG$ otherwise. We present Algorithm \ref{algo:1} to solve this problem whose running time is $O(2^k(n|\cG|)^{k+1} + n^{2k+1}|\cG|)$ as stated by Theorem \ref{thm:main}.

Notice that we can assume that $\cH\subseteq Tr(\cG)$ since otherwise the answer is no, and we can easily find a new minimal transversal in $Tr(\cH) \setminus \cG$. Indeed assume that there exists $E\in \cF$ such that $E$ is not a minimal transversal of $\cG$, i.e. there exists $x\in E$ such that $E\setminus \{x\}$ is also a transversal of $\cG$, then any minimal transversal $T$ of $\cH$ included in $(V\setminus E)\cup \{x\}$ belongs to $Tr(\cH)\setminus \cG$.


The algorithm will actually either answer that $Tr(\cG)=\cH$ or find a new minimal transversal  $T\in Tr(\cG) \setminus \cH$ otherwise. In the latter case, given $T$, one can easily find a new minimal transversal of $ Tr(\cH) \setminus \cG$. Indeed it is enough to output any minimal transversal of $\cH$ included in $V\setminus T$.

We are trying to find $T\in Tr(\cG)$ such that $T\notin \cH$. The main strategy is based on the observation that if such a $T$ exists, then either $T$ is $k$-compatible with $\cH$ (i.e. $T\in \extk(\cH)$) or it realizes a $k$-trace that is not in $\tracek(\cH)$. It is easy to check whether there exists a $T$ that satisfies the latter case in general hypergraphs, and we show that the former case can be checked in polynomial time when $\VCdim(\cH)<k$.
The strategy is summarized in Algorithm \ref{algo:1}.

\begin{algorithm}\label{algo:1}
	\SetKwInOut{Input}{input}\SetKwInOut{Output}{output}

	\Input{Two hypergraphs $\cH$ and $\cG$ and $k\in \mathbb{N}$}
	\Output{Yes if $Tr(\cG)=\cH$ or $E\in Tr(\cG)\setminus \cH$ otherwise}
	\BlankLine
	\Begin{


		\ForEach{$k$-trace $(T,S)\notin \tracek(\cH)$ }{
			\If{$(T,S)$ is realizable by a minimal transversal $E$ of $Tr(\cG)$}{\Return $E$}
		}

		\ForEach{$E\in \extk(\cH)$, such that $E$ is not included in any hyperedge of $\cH$}{
			\If{$E\in Tr(\cG)$}{
				\Return $E$
			}

		}
		\Return Yes
	}
	\caption{Dualize}
\end{algorithm}
The first loop of Algorithm \ref{algo:1} tries to find a minimal transversal $E\in Tr(\cG) \setminus \extk(\cH)$, i.e. a minimal transversal of $\cG$ that realizes a $k$-trace $(T,S)\notin \tracek(\cH)$. To do so we simply go over all $k$-traces that are not in $\tracek(\cH)$ and check for each one whether it is realizable by a minimal transversal of $\cG$. There are at most $2^k\binom{|V|}{k}$ such $k$-traces and we will show in Corollary \ref{cor:traceCheck} that we can check in polynomial time whether a $k$-trace is realizable by a minimal transversal of $\cG$.

The second loop of Algorithm \ref{algo:1} tries to find a minimal transversal of $\cG$ that is $k$-compatible with $\cH$ and that does not belong to $\cH$. For this, we first generate $\extk(\cH)$ from $\cH$ and we try every possible hyperedge of $\extk(\cH)$ that is not included in a hyperedge of $\cH$. In general the size of $\extk(\cH)$ may be exponentially larger than the size of $\cH$, but we prove in Proposition \ref{prop:computeextk} that  when $\VCdim(\cH)<k$, its size is polynomial and we can compute it in polynomial time.




To check the condition on line 3, we need to test whether $\cG$ has a minimal transversal containing $T$ and excluding $S\setminus T$ which can be done in polynomial time when $T$ is of bounded size as we will see in section \ref{subsec:subtrans}. The remaining part is to compute $\extk(\cH)$. We will see in section \ref{sec:extk} that this can be done in  polynomial time  whenever $\VCdim(\cH)$ is bounded.

\begin{theorem}\label{thm:correctness}
	For any $k\in \mathbb{N}$, Algorithm \ref{algo:1} correctly returns "Yes" if $Tr(\cG) = \cH$ and a set $E\in Tr(\cG) \setminus \cH$ otherwise.
\end{theorem}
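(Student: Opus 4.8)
The plan is to establish correctness in two parts: \emph{soundness}, that every set $E$ output by the algorithm actually lies in $Tr(\cG)\setminus\cH$, and \emph{completeness}, that the algorithm reaches its final ``Yes'' output precisely when $Tr(\cG)\setminus\cH=\emptyset$. The ingredients I would use are: the standing assumption $\cH\subseteq Tr(\cG)$ (justified by the remark just before the algorithm); the inclusion $\cH\subseteq\extk(\cH)$ from the preliminaries; the defining equivalence that $E\notin\extk(\cH)$ iff $E$ realizes some $k$-trace not in $\tracek(\cH)$; and the elementary fact that $Tr(\cG)$ is a Sperner family (a minimal transversal cannot strictly contain another).

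Soundness is checked at the two return statements that output a set. If the first loop returns $E$, then by the tested condition $E$ is a minimal transversal of $\cG$, so $E\in Tr(\cG)$, and it realizes some $(T,S)\notin\tracek(\cH)$; hence $\tracek(E)\not\subseteq\tracek(\cH)$, so $E\notin\extk(\cH)$, and as $\cH\subseteq\extk(\cH)$ this gives $E\notin\cH$. If the second loop returns $E$, then $E\in Tr(\cG)$ by the tested condition and $E$ is contained in no hyperedge of $\cH$, so in particular $E\notin\cH$. In both cases $E\in Tr(\cG)\setminus\cH$. Consequently, when $Tr(\cG)=\cH$ there is no candidate to return, both loops terminate without output, and the algorithm correctly answers ``Yes''.

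For the other direction, assume $Tr(\cG)\neq\cH$; since $\cH\subseteq Tr(\cG)$ we may pick $E\in Tr(\cG)\setminus\cH$, and I would split on the dichotomy the two loops are built around. If $E\notin\extk(\cH)$, then $E$ realizes some $(T,S)\notin\tracek(\cH)$, and since $E$ is a minimal transversal of $\cG$ realizing $(T,S)$, the test of the first loop succeeds when this trace is examined (if the loop has not already returned on an earlier one), so the first loop returns some set and the algorithm does not answer ``Yes''. If $E\in\extk(\cH)$, I first argue $E$ is contained in no hyperedge of $\cH$: were $E\subseteq F$ with $F\in\cH\subseteq Tr(\cG)$, then $E$ and $F$ would both be minimal transversals of $\cG$ with $E\subseteq F$, forcing $E=F\in\cH$ by the Sperner property --- a contradiction. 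Hence $E$ is an eligible candidate of the second loop; assuming the first loop has not already returned, the second loop returns some set once it reaches $E$ (or earlier). Either way the algorithm outputs a set rather than ``Yes'', and by soundness that set lies in $Tr(\cG)\setminus\cH$.

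I do not anticipate a serious obstacle: the core of the argument is just the tautological case split $E\in\extk(\cH)$ versus $E\notin\extk(\cH)$, matched to the two loops. The point that needs a little care is that the loops are not independent --- the first may return before the second is ever entered --- so completeness must be phrased as ``the algorithm returns \emph{some} set'', with soundness then supplying validity; and it is the Sperner property of $Tr(\cG)$, together with $\cH\subseteq Tr(\cG)$, that rules out the awkward possibility of a new minimal transversal of $\cG$ lying strictly inside an existing hyperedge of $\cH$. The hypothesis $\VCdim(\cH)<k$ plays no role in correctness; it is needed only for the running-time bound.
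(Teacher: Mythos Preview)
Your proof is correct and follows essentially the same two-part structure (soundness of each return statement, then completeness via the case split $E\in\extk(\cH)$ vs.\ $E\notin\extk(\cH)$) as the paper's own argument. If anything, you are more careful than the paper on one point: the paper simply asserts that a witness $E\in\extk(\cH)\cap(Tr(\cG)\setminus\cH)$ ``would be returned in the loop in line 5'' without justifying that $E$ passes the filter ``not included in any hyperedge of $\cH$'', whereas you supply this via the Sperner property of $Tr(\cG)$ together with $\cH\subseteq Tr(\cG)$.
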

\begin{proof}
	Let us show first that if a set $E$ is returned by the algorithm, then $E\in Tr(\cG) \setminus \cH$.

	Assume first that $E$ is returned in line 4, i.e. that the condition in line 3 is true. The condition imposes that $E\in Tr(\cG)$ and since $E$ realizes a $k$-trace $(T,S)$ that does not belong to $\tracek(\cH)$, $E$ cannot belong to $\cH$ since otherwise all $k$-traces realized by $E$ would belong to $\tracek(\cH)$. So $E\in Tr(\cG) \setminus \cH$. Suppose now that $E$ is returned in line 7.
	Notice that we impose in line 5 that $E$ is contained in no hyperedge of $\cH$. In particular, $E$ cannot belong to $\cH$. Together with the condition in line 6, $E$ is returned in line 7 only if $E\in Tr(\cG)\setminus \cH$.
	So if $Tr(\cG) = \cH$ then the algorithm answer "Yes" since otherwise it would return a set $E \in Tr(\cG) \setminus \cH$.

	Assume now that the algorithm answer "Yes" and let us show that $Tr(\cH) = \cH$. Assume for contradiction that there exists $E\in Tr(\cG) \setminus \cH$. If $\tracek(E)\subseteq \tracek(\cH)$ then $E\in \extk(\cH)$ and then, $E$ would be returned in the loop in line 5. Otherwise there is a $k$-trace $(T,S)\in \tracek(E) \setminus \tracek(\cH)$. But then, $E$ would be a minimal transversal of $\cG$ which realizes a trace $(T,S) \notin \tracek(\cH)$ and such a minimal transversal of $\cG$ would be returned in line 4. Notice that it wouldn't be necessarily $E$ that would be returned, but any $E'\in Tr(\cG)$ that realizes $(T,S)$.
\end{proof}

\subsection{Finding a minimal transversal satisfying a $k$-trace}\label{subsec:subtrans}

In this section, we show that the condition in line 3 of Algorithm \ref{algo:1} can be checked in polynomial time. More precisely, given a $k$-trace $(T,S)$, we show that we can check in time $O(n|\cG|^{|T|+1})$ whether there exists a minimal transversal of $\cG$ realizing $(T,S)$. When $k$ is a constant, $T$ is of constant size and the above-mentioned complexity is polynomial. To do so, we reduce the problem to deciding whether $T$ is a sub-transversal of a subhypergraph of $\cG$. A set of vertices $T$ is a \emph{sub-transversal} of $\cH$ if there exists a minimal transversal $E\in Tr(\cH)$ such that $T\subseteq E$. In \cite{boros_dual_1998}, the authors proved that one can check in polynomial time whether a subset of vertices $T$ of bounded size is a sub-transversal.

\begin{lemma}[\cite{boros_dual_1998}]
	Given a subset of vertices $T$, one can check in time $O(n|\cH|^{|T|+1})$ whether $T$ is a sub-transversal of $\cH$.
\end{lemma}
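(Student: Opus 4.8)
The plan is to use the classical \emph{private witness} characterization of minimal transversals: a transversal $E$ of $\cH$ is minimal if and only if every $v\in E$ has a hyperedge $F_v\in\cH$ with $F_v\cap E=\{v\}$. In particular, if $E\supseteq T$ is a minimal transversal, then each $t\in T$ has such a witness $F_t$, and since $T\subseteq E$ this forces $F_t\cap T=\{t\}$. The algorithm will therefore \emph{guess the witness system}: for every $t\in T$ choose a hyperedge $F_t$ from the set $\{F\in\cH : F\cap T=\{t\}\}$. These sets are pairwise disjoint over $t\in T$, so there are at most $|\cH|^{|T|}$ candidate systems, and — by the remark above — the "correct" witness system of any minimal transversal containing $T$ is among them.

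Fix one candidate system $(F_t)_{t\in T}$ and set $W:=\bigcup_{t\in T}(F_t\setminus\{t\})$; note $W\cap T=\emptyset$ automatically. Form the residual hypergraph
\[
\cH^{*}\ :=\ \bigl\{\, F\setminus W \ :\ F\in\cH,\ F\cap T=\emptyset \,\bigr\}
\]
on the ground set $V\setminus(T\cup W)$. The key claim is that $(F_t)_{t\in T}$ extends to a minimal transversal of $\cH$ containing $T$ if and only if $\cH^{*}$ has no empty hyperedge (equivalently, $\cH^{*}$ admits a transversal, hence a minimal one). For the "if" direction, take any minimal transversal $E'$ of $\cH^{*}$ and put $E:=T\cup E'$: every hyperedge of $\cH$ meeting $T$ is hit by $T$, and every hyperedge disjoint from $T$ is hit by $E'$ since $E'\cap W=\emptyset$; moreover each $t\in T$ keeps $F_t$ as a private witness because $F_t\setminus\{t\}\subseteq W$ is disjoint from $E'$, and each $v\in E'$ inherits a private witness of $\cH$ from its private witness in $\cH^{*}$. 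Hence $E$ is a minimal transversal of $\cH$ with $T\subseteq E$. For "only if", if $E\supseteq T$ is minimal with private witnesses $F_t$, then $W\cap E=\emptyset$, so no hyperedge $F$ disjoint from $T$ can satisfy $F\subseteq W$ (it would be unhit by $E$), i.e. $\cH^{*}$ has no empty hyperedge. Thus the correct test is: "does some candidate witness system produce a residual hypergraph without an empty hyperedge?"

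For the running time, there are at most $|\cH|^{|T|}$ candidate systems; for each one, computing $W$, building $\cH^{*}$, and scanning it for an empty hyperedge costs $O(n|\cH|)$, for a total of $O(n|\cH|^{|T|+1})$, which is polynomial when $|T|$ is bounded. (If, as in the application, one wants to actually exhibit a minimal transversal realizing the test rather than merely decide, one additionally extracts a minimal transversal of $\cH^{*}$ by the usual greedy deletion; this is again polynomial for $|T|$ bounded.) The only genuinely delicate point is the correctness of the residual reduction: verifying that the \emph{guessed} witnesses remain private after adjoining an arbitrary minimal transversal of $\cH^{*}$, and that this adjunction creates no new minimality obstruction — but this is exactly the witness-disjointness bookkeeping carried out above, and it goes through cleanly because $E'$ avoids $W$ and is disjoint from $T$.
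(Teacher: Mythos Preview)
The paper does not give its own proof of this lemma; it is quoted from \cite{boros_dual_1998}. Your argument is correct and is precisely the classical one from that reference: enumerate all candidate private-witness systems $(F_t)_{t\in T}$ with $F_t\cap T=\{t\}$, and for each one check whether the residual hypergraph $\cH^{*}$ (obtained by deleting the forbidden set $W=\bigcup_t(F_t\setminus\{t\})$ and the already-hit hyperedges) has an empty hyperedge. Two minor remarks: the pairwise disjointness of the sets $\{F:F\cap T=\{t\}\}$ is true but not needed for the $|\cH|^{|T|}$ bound, which follows simply because each factor is at most $|\cH|$; and the implicit use of $|T|\le|\cH|$ in the per-system cost $O(n|\cH|)$ is justified since the private witnesses $F_t$ are necessarily distinct hyperedges, so if $|T|>|\cH|$ no witness system exists and $T$ is trivially not a sub-transversal.
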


\begin{lemma}
	Let  $(T,S)$ be a $k$-trace on $V$ and let $V':= V\setminus (S\setminus T)$. Then, there exists a minimal transversal of $\cG$ realizing $(T,S)$ if and only if $T$ is a sub-transversal of the subhypergraph $\cG'=(V',\{F\cap V'\mid F\in \cG\})$
\end{lemma}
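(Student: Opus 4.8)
The plan is to prove the two directions of the equivalence separately, translating back and forth between ``realizing the trace $(T,S)$'' and ``being a minimal transversal containing $T$'' in the restricted vertex set $V'$. The key point is that restricting to $V' = V \setminus (S\setminus T)$ is exactly the operation that removes the forbidden vertices $S\setminus T$ from consideration: a set $E\subseteq V$ realizes $(T,S)$, i.e. $E\cap S = T$, precisely when $E$ contains all of $T$ and avoids all of $S\setminus T$. So the natural correspondence to set up is between a minimal transversal $E$ of $\cG$ realizing $(T,S)$ and its image $E\cap V' = E\setminus(S\setminus T)$ in the subhypergraph $\cG'$.

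For the forward direction, suppose $E\in Tr(\cG)$ realizes $(T,S)$. Then $E\cap (S\setminus T)=\emptyset$, so $E\subseteq V'$ and $E\cap V' = E$. I would argue that $E$ is still a transversal of $\cG'$: for each $F\in\cG$, $E\cap F\neq\emptyset$, and since $E\subseteq V'$, any vertex in $E\cap F$ lies in $F\cap V'$, so $E\cap(F\cap V')\neq\emptyset$. To get a \emph{minimal} transversal of $\cG'$ containing $T$, note $E$ is a transversal of $\cG'$ containing $T$; extract a minimal transversal $E'\subseteq E$ of $\cG'$ with $T\subseteq E'$ — this is possible because removing vertices outside $T$ one at a time while staying a transversal of $\cG'$ keeps $T$ inside (one needs that $T$ itself need not be a transversal, but we only need \emph{some} minimal transversal containing $T$, and a standard greedy shrinking of $E$ down, never removing a vertex of $T$ unless forced, works provided $T$ extends to a minimal transversal; the cleanest formulation is that $E$ is a transversal of $\cG'$ with $T\subseteq E$, hence $T$ is a sub-transversal of $\cG'$ iff there is a minimal transversal of $\cG'$ between $T$ and $E$, which holds automatically since minimal transversals of $\cG'$ contained in $E$ exist and at least one can be chosen to contain $T$ as long as $T$ is ``independent'' — I will instead observe directly that $E$ being minimal in $\cG$ forces the relevant minimality in $\cG'$).

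For the converse, suppose $T$ is a sub-transversal of $\cG'$, so there is $E'\in Tr(\cG')$ with $T\subseteq E'\subseteq V'$, hence $E'\cap(S\setminus T)=\emptyset$. I claim $E'$ is a transversal of $\cG$: given $F\in\cG$, if $F\cap V'\neq\emptyset$ then $E'\cap(F\cap V')\neq\emptyset\subseteq E'\cap F$; the case $F\cap V'=\emptyset$ means $F\subseteq S\setminus T$, which cannot happen if $\cG\subseteq Tr(\cH)$ guarantees no hyperedge of $\cG$ is disjoint from $V'$ (or, more carefully, if some $F\subseteq S\setminus T$ then no transversal realizing $(T,S)$ exists and one checks $T$ is not a sub-transversal of $\cG'$ either, since $\cG'$ has an empty hyperedge — I will handle this degenerate case explicitly). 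Then take a minimal transversal $E\subseteq E'$ of $\cG$; since every vertex of $E'$ outside $T$ lies outside $S\setminus T$ already, and $T\subseteq E'$, after possibly re-inflating to keep $T$ we get $E\in Tr(\cG)$ with $T\subseteq E$ and $E\cap(S\setminus T)=\emptyset$, i.e. $E\cap S = T$, so $E$ realizes $(T,S)$.

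The main obstacle is the bookkeeping around \emph{minimality}: shrinking a transversal to a minimal one can in principle remove vertices of $T$, and inflating can add vertices of $S\setminus T$, so one must argue that the witness can always be chosen to respect both constraints simultaneously. The clean way around this is to phrase everything in terms of sub-transversals from the start: $E$ realizes $(T,S)$ and is minimal in $\cG$ iff $T\subseteq E$, $E\cap(S\setminus T)=\emptyset$, and $E\in Tr(\cG)$; the bijection $E\mapsto E$ (noting $E=E\cap V'$) between such $E$ and minimal transversals of $\cG'$ containing $T$ is then immediate once one checks that $Tr(\cG')$ consists exactly of the minimal transversals of $\cG$ that avoid $S\setminus T$ — which follows from the fact that deleting vertices from a hypergraph and from all its hyperedges sends minimal transversals avoiding the deleted set bijectively to minimal transversals of the subhypergraph. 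I would isolate this last fact as the crux and prove it by the two containments above.
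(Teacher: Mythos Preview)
Your proposal is correct and ultimately lands on exactly the argument the paper uses: the crux is the observation that $Tr(\cG')$ is precisely the set of minimal transversals of $\cG$ contained in $V'$, after which both directions are one line. The paper simply states this observation at the outset (``Observe first that $E\in Tr(\cG')$ if and only if $E$ is a minimal transversal of $\cG$ such that $E\subseteq V'$'') and never needs to shrink or re-inflate anything; all of your intermediate worries about extracting a minimal transversal while preserving $T$, or re-inflating after shrinking, are detours that disappear once you commit to that observation from the start rather than at the end. Your handling of the degenerate case $F\subseteq S\setminus T$ (empty hyperedge in $\cG'$) is more careful than the paper, which silently relies on the lemma's definition of $\cG'$ keeping empty intersections so that both sides of the equivalence are vacuously false.
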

\begin{proof}
	Observe first that $E\in Tr(\cG')$ if and only if $E$ is a minimal transversal of $\cG$ such that $E\subseteq V'$. Assume first that $T$ is a sub-transversal of $\cG'$. There exists $E\in Tr(\cG')$ such that $T\subseteq E$. Since $E$ is a minimal transversal of $\cG'$, it is also a minimal transversal of $\cG$. Now, since $E\subseteq V'$ and $V'\cap S = T$, we have $E \cap S = T$, and thus $E$ realizes $(T,S)$.
	Assume now that there exists $E\in Tr(\cG)$ such that $E$ realizes $(T,S)$. We have $E\cap S = T$, and hence $E\subseteq V'$. Therefore, $E$ is a minimal transversal of $\cG'$ containing $T$.
\end{proof}

Note that in the lemma above, $\cG'$ may contain empty hyperedges, namely when some hyperedges of $\cG$ are entirely contained in $S\setminus T$ (and thus become empty after restriction to $V'$). In that case, no minimal transversal of $\cG$ can realize $(T,S)$, which is consistent with the fact that $\cG'$ has no minimal transversals (a hypergraph with an empty hyperedge has no transversals).

Combining the two previous lemmas, we obtain the following corollary.

\begin{corollary}\label{cor:traceCheck}
	Given a $k$-trace $(T,S)$, one can check in time $O(n|\cG|^{|T|+1})$ whether there exists a minimal transversal $E$ of $\cG$ realizing $(T,S)$.
\end{corollary}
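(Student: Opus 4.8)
The plan is to read off the corollary as the composition of the two preceding lemmas. Given the $k$-trace $(T,S)$ on $V$, I would form $V' := V\setminus(S\setminus T)$ and the subhypergraph $\cG' := (V',\{F\cap V'\mid F\in\cG\})$, a computation that costs $O(n|\cG|)$ time. The preceding lemma then tells us that $\cG$ has a minimal transversal realizing $(T,S)$ if and only if $T$ is a sub-transversal of $\cG'$, so it suffices to decide the latter. Applying the result of \cite{boros_dual_1998} to the hypergraph $\cG'$ and the vertex subset $T$, this can be decided in time $O(|V'|\,|\cG'|^{|T|+1})$.

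It remains only to check that the size parameters of $\cG'$ do not exceed those of $\cG$: restricting each hyperedge to $V'$ can only shrink it (or identify it with another hyperedge), so $|\cG'|\le|\cG|$, and trivially $|V'|\le|V|=n$. Substituting these bounds gives running time $O(n|\cG|^{|T|+1})$, which also absorbs the $O(n|\cG|)$ cost of building $\cG'$. One degenerate case is worth a word: if some hyperedge $F\in\cG$ satisfies $F\subseteq S\setminus T$, then $F\cap V'=\emptyset$, so $\cG'$ contains the empty hyperedge and has no transversal; correspondingly no transversal of $\cG$ can avoid all of $S\setminus T$, so the equivalence of the preceding lemma still holds and the procedure correctly reports "no".

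I do not expect any genuine obstacle here: all the work is in the two lemmas already established, and the corollary is just their chaining together, plus the routine observation that passing to an induced subhypergraph does not increase the relevant size measures.
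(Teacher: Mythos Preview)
Your proposal is correct and follows exactly the approach the paper intends: the corollary is stated without proof because it is the immediate composition of the two preceding lemmas, and your write-up simply makes that chaining explicit, together with the harmless observation that $|V'|\le n$ and $|\cG'|\le|\cG|$.
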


\subsection{Computing $\extk(\cH)$}\label{sec:extk}
In this section we show that when $\VCdim(\cH)<k$, $\extk(\cH)$ can be computed in time $O(n^{2k})$.


For $i\leq n$, let us denote by $V_i$ the set $\{v_1,\ldots,v_i\}$ and let $\cH_i$ be the hypergraph $(V_i,\{F\cap V_i \mid F\in \cH\})$.
We iteratively build $\extk(\cH_i)$ from $\extk(\cH_{i-1})$. For each $E\in \extk(\cH_{i-1})$, we check whether $E$ and $E\cup\{v_i\}$ violate a $k$-trace in $\tracek(\cH_{i})$.

\begin{lemma}\label{lem:initial}
	$\extk(\cH_k) = \cH_k$.
\end{lemma}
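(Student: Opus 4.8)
The plan is to prove the two inclusions separately, the nontrivial direction being $\extk(\cH_k) \subseteq \cH_k$. The inclusion $\cH_k \subseteq \extk(\cH_k)$ is immediate: as observed in the preliminaries, every hyperedge of a hypergraph is $k$-compatible with it, so $\cH_k \subseteq \extk(\cH_k)$ with no further work.

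For the reverse inclusion, the key observation I would exploit is that $V_k = \{v_1,\dots,v_k\}$ has exactly $k$ elements, hence the \emph{only} $k$-subset $S \subseteq V_k$ is $V_k$ itself. Consequently, for any $E \subseteq V_k$, the set $\trace_k(E, V_k)$ consists of the single trace $(E \cap V_k,\, V_k) = (E, V_k)$, and likewise $\trace_k(\cH_k)$ consists precisely of the traces $(F', V_k)$ for $F' \in \cH_k$ (here using that $F' \subseteq V_k$, so $F' \cap V_k = F'$). Therefore the condition $E \in \extk(\cH_k)$, i.e. $\trace_k(E) \subseteq \trace_k(\cH_k)$, unfolds to: $(E, V_k) \in \trace_k(\cH_k)$, which means there is $F' \in \cH_k$ with $F' \cap V_k = E$, i.e. $F' = E$. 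Hence $E \in \cH_k$, giving $\extk(\cH_k) \subseteq \cH_k$ and the claimed equality.

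There is essentially no obstacle here; the statement is a degenerate base case whose only content is that the single available $k$-subset forces the trace condition to coincide with membership. The one point to state carefully is the identification $\trace_k(F, V_k) = \{(F, V_k)\}$ for $F \subseteq V_k$, which is exactly where the hypothesis $|V_k| = k$ is used. This lemma then serves as the initialization for the iterative construction of $\extk(\cH_i)$ from $\extk(\cH_{i-1})$ described in the surrounding text.
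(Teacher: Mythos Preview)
Your proof is correct and follows essentially the same approach as the paper: both directions are handled identically, with the nontrivial inclusion obtained by observing that $|V_k|=k$ forces $V_k$ to be the unique $k$-subset, so the trace condition collapses to the existence of some $F\in\cH_k$ with $F\cap V_k=E\cap V_k$, hence $F=E$.
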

\begin{proof}
	As noted previously, for any hypergraph $\cH$, we have $\cH \subseteq \extk(\cH)$, and hence $\cH_k \subseteq \extk(\cH_k)$. Let us show that $\extk(\cH_k) \subseteq \cH_k$. Let $E\in \extk(\cH_k)$. Since $|V_k| = k$, the only $k$-trace realized by $E$ on $V_k$ is the $k$-trace $(E\cap V_k, V_k)$. Now, since $E\in \extk(\cH_k)$, there exists a hyperedge $F\in \cH_k$ which realizes the $k$-trace $(E\cap V_k, V_k)$, i.e., such that $F\cap V_k = E\cap V_k$. But since both $F\subseteq V_k$ and $E\subseteq V_k$, we have $F=E$, i.e., $E$ is a hyperedge of $\cH_k$.
\end{proof}

\begin{lemma}\label{lem:nextext}
	Let $k<i\leq n$ and let $E\in \extk(\cH_i)$. Then $E\setminus \{v_i\}\in \extk(\cH_{i-1})$.
\end{lemma}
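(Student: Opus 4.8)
The plan is to unwind the definition of $k$-compatibility and verify it directly for $E':=E\setminus\{v_i\}$ against $\cH_{i-1}$. First I would record the trivial containment: since $E\in\extk(\cH_i)$ we have $E\subseteq V_i$, hence $E'\subseteq V_{i-1}$, so $E'$ is a candidate hyperedge of $\extk(\cH_{i-1})$ and it makes sense to test $k$-compatibility. Recall that $E'\in\extk(\cH_{i-1})$ means exactly that for every $k$-subset $S\subseteq V_{i-1}$ there is a hyperedge of $\cH_{i-1}$ realizing the trace $(E'\cap S, S)$.

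So I would fix an arbitrary $k$-subset $S\subseteq V_{i-1}$. Since $V_{i-1}\subseteq V_i$, the set $S$ is also a $k$-subset of $V_i$, and because $E$ is $k$-compatible with $\cH_i$ there exists $F\in\cH_i$ with $F\cap S=E\cap S$. By definition of $\cH_i$, write $F=F_0\cap V_i$ for some $F_0\in\cH$, and set $G:=F_0\cap V_{i-1}$, which is a hyperedge of $\cH_{i-1}$ by definition.

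It then remains to check two elementary equalities. First, $v_i\notin V_{i-1}$ and $S\subseteq V_{i-1}$, so $v_i\notin S$, whence $E'\cap S=(E\setminus\{v_i\})\cap S=E\cap S$. Second, $S\subseteq V_{i-1}$ gives $G\cap S=F_0\cap V_{i-1}\cap S=F_0\cap S=(F_0\cap V_i)\cap S=F\cap S$. Chaining these, $E'\cap S=E\cap S=F\cap S=G\cap S$, so $G\in\cH_{i-1}$ realizes $(E'\cap S,S)$. Since $S$ was an arbitrary $k$-subset of $V_{i-1}$, this shows $\tracek(E')\subseteq\tracek(\cH_{i-1})$, i.e. $E'\in\extk(\cH_{i-1})$, as desired.

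The whole argument is a bookkeeping exercise with the definitions rather than a substantive difficulty; the only point needing a moment's care is that restricting the witness hyperedge from $V_i$ down to $V_{i-1}$ leaves its intersection with $S$ unchanged, which is exactly why one must observe that $S$ avoids $v_i$. The main thing to keep straight throughout is the nested chain of ambient vertex sets $V_{i-1}\subseteq V_i\subseteq V(\cH)$ and the corresponding restrictions of hyperedges.
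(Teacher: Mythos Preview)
Your proof is correct and follows essentially the same approach as the paper's: fix a $k$-subset $S\subseteq V_{i-1}$, use $k$-compatibility of $E$ with $\cH_i$ to get a witness $F\in\cH_i$, and then observe that removing $v_i$ (which lies outside $S$) from both $E$ and the witness preserves the trace on $S$. The only cosmetic difference is that the paper takes the witness in $\cH_{i-1}$ to be $F\setminus\{v_i\}$ directly, whereas you go back to $F_0\in\cH$ and restrict to $V_{i-1}$; these are the same set.
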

\begin{proof}
	Let $E':= E\setminus \{v_i\}$. Let us show that $\tracek(E',V_{i-1})\subseteq \tracek(\cH_{i-1})$. Let $S\subseteq V_{i-1}$ with $|S|=k$. Since $E\in \extk(\cH_i)$, there exists $F\in \cH_{i}$ such that $E\cap S = F\cap S$. Let $F':= F\setminus \{v_i\}$. Since $v_i\notin S$, $E'\cap S = F'\cap S$. Since $F$ is a hyperedge of $\cH_i$, $F'$ is a hyperedge of $\cH_{i-1}$ and so the trace $(E'\cap S,S) = (F'\cap S,S)$ belongs to $\tracek(\cH_{i-1})$. This holds for every $k$-subset $S\subseteq V_{i-1}$, hence $E'\in \extk(\cH_{i-1})$.
\end{proof}

\begin{proposition}\label{prop:computeextk}
	If $\VCdim(\cH)<k$ then $\extk(\cH)$ can be computed in time $O(n^{2k})$.
\end{proposition}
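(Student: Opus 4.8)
The plan is to implement the iterative scheme already foreshadowed in the text: compute $\extk(\cH_i)$ for $i = k, k+1, \dots, n$, starting from the base case $\extk(\cH_k) = \cH_k$ given by Lemma~\ref{lem:initial}, and show that each step costs $O(n^k)$ per candidate set, with only $O(n^k)$ candidate sets to consider, so that the whole computation is $O(n \cdot n^k \cdot n^k) = O(n^{2k+1})$ — or $O(n^{2k})$ after absorbing the outer loop over $i$ into the bound by being slightly more careful, as I discuss below.

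First I would make precise the ``grow by one vertex'' recurrence. By Lemma~\ref{lem:nextext}, every $E \in \extk(\cH_i)$ has the form $E'$ or $E' \cup \{v_i\}$ for some $E' \in \extk(\cH_{i-1})$. So the algorithm maintains the list $\extk(\cH_{i-1})$, and for each $E'$ in it, considers the two candidates $E'$ and $E' \cup \{v_i\}$ on vertex set $V_i$, keeping a candidate $E$ exactly when $\tracek(E, V_i) \subseteq \tracek(\cH_i)$. The key size bound is Corollary~\ref{cor:sauer}: since $\VCdim(\cH) < k$ and the $k$-traces of $\cH_i$ are a subset of those of $\cH$ (restriction to $V_i \subseteq V$ cannot create a shattered $k$-set that was not already shattered in $\cH$), we have $\VCdim(\cH_i) < k$, hence $\VCdim(\extk(\cH_i)) < k$, hence $|\extk(\cH_i)| = O(i^k) = O(n^k)$ by Lemma~\ref{lem:sauer}. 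Thus at every stage the list we carry has size $O(n^k)$, and the number of candidates tested at stage $i$ is $2|\extk(\cH_{i-1})| = O(n^k)$.

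Next I would bound the cost of the membership test ``$\tracek(E, V_i) \subseteq \tracek(\cH_i)$'' for a single candidate $E$. Here I would exploit that we do not need to recheck all $k$-subsets of $V_i$ from scratch: by Lemma~\ref{lem:nextext}'s proof, if $E' \in \extk(\cH_{i-1})$ then every $k$-subset $S \subseteq V_{i-1}$ is already fine for both $E'$ and $E' \cup \{v_i\}$ (a witness $F' \in \cH_{i-1}$ lifts to a witness $F \in \cH_i$ with the same trace on $S$, since $v_i \notin S$). So the only $k$-subsets that must be freshly checked are those containing $v_i$, of which there are $\binom{i-1}{k-1} = O(n^{k-1})$. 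For each such $S$ we must decide whether some $F \in \cH_i$ satisfies $E \cap S = F \cap S$; naively this is $O(|\cH| \cdot k)$, but since the relevant data is really the trace set $\tracek(\cH_i)$ — which has size $O(n^k)$ and can be precomputed once — a single query is $O(k)$ (a lookup) after an $O(n^k \cdot n \cdot k)$ preprocessing of $\tracek(\cH)$. Hence each candidate costs $O(n^{k-1} \cdot k) = O(n^{k-1})$, each stage costs $O(n^k) \cdot O(n^{k-1}) = O(n^{2k-1})$, and the total over $n$ stages is $O(n^{2k})$, matching the claim (and dominating the one-time $O(n^{k+1})$ preprocessing). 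Finally I would note that $\extk(\cH) = \extk(\cH_n)$ is exactly what this procedure outputs, so correctness follows from Lemmas~\ref{lem:initial} and~\ref{lem:nextext} together with the definition of $k$-compatibility.

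The main obstacle I anticipate is the bookkeeping in the incremental test: one has to argue carefully that checking only the $k$-subsets containing $v_i$ suffices, i.e. that no previously-valid trace constraint can be broken by moving from $V_{i-1}$ to $V_i$ or by adding $v_i$ to $E'$. This is the content of (the proof of) Lemma~\ref{lem:nextext}, so the argument is available, but it is the place where an off-by-one or a ``which vertex set are we tracing on'' slip would invalidate the complexity bound. The other place requiring a little care is justifying $\VCdim(\cH_i) < k$ from $\VCdim(\cH) < k$ under vertex restriction; this is immediate from the trace formulation of VC-dimension in the Preliminaries, since a $k$-set $S \subseteq V_i$ shattered in $\cH_i$ would be shattered in $\cH$ as well.
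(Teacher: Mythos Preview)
Your proposal is correct and follows essentially the same approach as the paper's proof: the same iterative scheme $\extk(\cH_k), \extk(\cH_{k+1}), \dots, \extk(\cH_n)$, the same use of Lemmas~\ref{lem:initial} and~\ref{lem:nextext} for correctness, the same Sauer--Shelah bound on $|\extk(\cH_i)|$, and the same key observation that at stage $i$ only the $O(n^{k-1})$ $k$-subsets containing $v_i$ need to be checked. Your added remark about precomputing $\tracek(\cH)$ for constant-time lookups is a reasonable implementation detail that the paper leaves implicit.
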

\begin{proof}
	To compute $\extk(\cH)$, we iteratively compute $\extk(\cH_i)$ from $\extk(\cH_{i-1})$ up to $\extk(\cH_n)=\extk(\cH)$. By Lemma \ref{lem:initial}, we have $\extk(\cH_k)=\cH_k$, and thus we can start the iteration at $i=k$. Then, for each $k<i\leq n$, we compute $\extk(\cH_i)$ from $\extk(\cH_{i-1})$ in the following way:

	\noindent For each $E\in \extk(\cH_{i-1})$ Do:\\
	\indent Add $E$ to $\extk(\cH_i)$ if $\tracek(E,V_i) \subseteq \tracek(\cH_i)$\\
	\indent Add $E':=E\cup\{v_i\}$ to $\extk(\cH_i)$ if $\tracek(E',V_i) \subseteq \tracek(\cH_i)$.\\

	By Lemma \ref{lem:nextext}, each hyperedge of $\extk(\cH_i)$ is found by the above procedure. Now observe that for any $i\leq n$, the hypergraph $\cH_i$ is a subhypergraph of $\cH$ and so $\VCdim(\cH_i)\leq \VCdim(\cH)<k$. Thus, by Corollary \ref{cor:sauer}, $|\extk(\cH_i)|=O(i^k)=O(n^{k})$. Now, for a given $E\in \extk(\cH_{i-1})$, we need to check whether $E$ (resp. $E':=E\cup \{v_i\}$) belongs to $\extk(\cH_{i})$, i.e., whether $\tracek(E,V_i) \subseteq \tracek(\cH_i)$ (resp. $\tracek(E',V_i) \subseteq \tracek(\cH_i)$). We claim that this can be checked in time $O((i-1)^{k-1})=O(n^{k-1})$. Let $S\subseteq V_i$ be a $k$-subset such that $v_i\notin S$. Since $S\subseteq V_{i-1}$ and since $E\in \extk(\cH_{i-1})$, there exists $F\in \cH_{i-1}$ such that $E\cap S=F\cap S$. Now there exists $F'\in \cH_i$ such that $F=F'\setminus\{v_i\}$ and since $v_i\notin S$, we have $F'\cap S = F\cap S = E\cap S = E'\cap S$. Thus, the $k$-trace $(E\cap S,S)\in \tracek(\cH_i)$ (resp. $(E'\cap S,S)\in \tracek(\cH_i)$). So if $E$ (resp. $E'$) realizes a trace $(E\cap S,S)\notin \tracek(\cH_i)$ (resp. $(E'\cap S,S)\notin \tracek(\cH_i)$), then $S$ must contain $v_i$. Therefore, to check whether $\tracek(E,V_i) \subseteq \tracek(\cH_i)$ (resp. $\tracek(E',V_i) \subseteq \tracek(\cH_i)$), we only have to check, for each of the $O((i-1)^{k-1})$ subsets $S$ of $V_i$ of size $k$ that contain $v_i$, whether $(E\cap S,S)\in \tracek(\cH_i)$.

	Hence, $\extk(\cH_i)$ is computed in $O(n^{2k-1})$ since we check for each of the $O(n^{k})$ hyperedges $E$ of $\extk(\cH_{i-1})$ whether $E$ and $E'$ belong to $\extk(\cH_i)$, which can be done in $O(n^{k-1})$. Thus, in total, $\extk(\cH)$ can be computed in time $O(n\times n^{2k-1})=O(n^{2k})$.


\end{proof}

We are now ready to prove Theorem \ref{thm:main}

\mainthm*
\begin{proof}
	From Theorem \ref{thm:correctness} Algorithm \ref{algo:1} returns "Yes" if and only if $Tr(\cG) = \cH$ and $E\in Tr(\cG)\setminus  \cH$ otherwise. By the duality property, the algorithm returns "Yes" if and only if $Tr(\cH) = \cG$. Assume now that the algorithm returns an element $E\in Tr(\cG)\setminus \cH$. Since we assumed that all hyperedges of $\cH$ are transversals of $\cG$, $E$ does not contain any heyperedge of $\cH$ and thus $V\setminus E$ is a transversal of $\cH$. Observe furthermore that $V\setminus E$ contains no hyperedge of $\cG$ since otherwise $E$ would not be a transversal of $\cG$. So to output a minimal transversal $E' \in Tr(\cH) \setminus \cG$ it is enough to compute any minimal transversal of $\cH$ contained in $V\setminus E$ which can be done in $O(n|\cH|)$.
	Let us show now that Algorithm \ref{algo:1} runs with the announced complexity. The for loop in line 2 goes over all $k$-traces $(T,S)\notin \tracek(\cH)$. There is at most $O(2^{k}n^{k})$ such traces and for each $k$-trace the condition in line 3 can be checked in time $O(n|\cG|^{k+1})$ by Corollary \ref{cor:traceCheck}. So the first for loop take in total $O(2^k(n|\cG|)^{k+1})$.

	For the second loop, by Lemma \ref{prop:computeextk}, we can compute $\extk(\cH)$ in time $O(n^{2k})$. Since for each $E\in \extk(\cH)$ one can check whether $E\in Tr(\cG)$ in time $O(n|\cG|)$ the total time taken by the for loop in line 2 is $O(n^{2k+1}|\cG|)$.

\end{proof}

\subsection{Consequences}

The first immediate consequence of Theorem \ref{thm:main} is obtained when the VC-dimension is bounded, i.e. when $k$ is a constant.

\begin{corollary}\label{cor:vc-bounded}
	\Thyp problem is solvable in polynomial time in hypergraph classes of bounded VC-dimension.
\end{corollary}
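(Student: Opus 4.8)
The plan is to obtain Corollary~\ref{cor:vc-bounded} as a direct specialization of Theorem~\ref{thm:main} to the case where the forbidden VC-dimension $k$ is a fixed constant. Fix a constant $k$ witnessing that the class $\mathscr{H}$ has bounded VC-dimension, so that every $\cH\in\mathscr{H}$ satisfies $\VCdim(\cH)<k$, and consider an instance $(\cH,\cG)$ of the \Thyp problem restricted to $\mathscr{H}$, so that $\cG\subseteq Tr(\cH)$ and at least one of $\cH,\cG$ lies in $\mathscr{H}$.

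First I would dispose of the case $\cH\in\mathscr{H}$: here Theorem~\ref{thm:main} applies verbatim and decides whether $\cG=Tr(\cH)$, producing $T\in Tr(\cH)\setminus\cG$ otherwise, in time $O(2^k(n|\cG|)^{k+1}+n^{2k+1}|\cG|)$; since $k$ is a constant this is a polynomial in the input size. The remaining case is $\cG\in\mathscr{H}$ (with possibly $\cH\notin\mathscr{H}$). Note first that $\cG$ is automatically Sperner, being a subfamily of the Sperner hypergraph $Tr(\cH)$, and that every hyperedge of $\cH$ is a transversal of $\cG$ because $\cG\subseteq Tr(\cH)$; so the duality relationship is available. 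As observed in the remark preceding Algorithm~\ref{algo:1}, one may test in polynomial time whether $\cH\subseteq Tr(\cG)$: if this fails, some $E\in\cH$ admits $x\in E$ with $E\setminus\{x\}$ still a transversal of $\cG$, and any minimal transversal of $\cH$ inside $(V\setminus E)\cup\{x\}$ lies in $Tr(\cH)\setminus\cG$, so we are done. Otherwise $\cH\subseteq Tr(\cG)$, and now I apply Theorem~\ref{thm:main} with the two hypergraphs exchanged, letting $\cG$ (which has VC-dimension $<k$) play the role of the hypergraph whose transversals are sought and $\cH$ play the role of the candidate family; the hypotheses $\VCdim(\cG)<k$ and $\cH\subseteq Tr(\cG)$ both hold, and the running time $O(2^k(n|\cH|)^{k+1}+n^{2k+1}|\cH|)$ is again polynomial. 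By the Sperner duality $\cH=Tr(\cG)\iff\cG=Tr(\cH)$, so an affirmative answer translates into "$\cG=Tr(\cH)$". A negative answer yields some $T\in Tr(\cG)\setminus\cH$, which I convert exactly as in the last paragraph of the proof of Theorem~\ref{thm:main}: since every hyperedge of $\cH$ is a transversal of $\cG$ and $T$ is a minimal transversal of $\cG$ not in $\cH$, $T$ contains no hyperedge of $\cH$, so $V\setminus T$ is a transversal of $\cH$ containing no hyperedge of $\cG$, and any minimal transversal of $\cH$ inside $V\setminus T$ belongs to $Tr(\cH)\setminus\cG$ and is found in time $O(n|\cH|)$.

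I do not expect a genuine obstacle here, since the statement is essentially Theorem~\ref{thm:main} read at constant $k$. The only point requiring care is bookkeeping: the restricted \Thyp problem permits \emph{either} input hypergraph to be the one of small VC-dimension, so one has to combine Theorem~\ref{thm:main} with the standard symmetrizing reduction to the case $\cH\subseteq Tr(\cG)$ and with the duality equivalence $\cH=Tr(\cG)\iff\cG=Tr(\cH)$ in order to cover the situation where it is the candidate family rather than $\cH$ that belongs to $\mathscr{H}$.
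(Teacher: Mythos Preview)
Your argument is correct and follows the paper's approach: the paper states this corollary as an immediate consequence of Theorem~\ref{thm:main} when $k$ is constant, without giving a separate proof. You are in fact more careful than the paper here, since you explicitly handle the case where it is $\cG$ rather than $\cH$ that lies in the bounded-VC class (as required by the paper's definition of the restricted \Thyp problem), using the duality reduction that the paper itself sets up in the paragraphs preceding Algorithm~\ref{algo:1} and in the proof of Theorem~\ref{thm:main}.
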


Even if several quasi-polynomial time algorithms are already known for general hypergraphs, we observe Algorithm \ref{algo:1} runs also in quasi-polynomial time.

\begin{corollary}
	Algorithm \ref{algo:1} runs in time $(n|\cG|)^{O(\log(\cH))}$.
\end{corollary}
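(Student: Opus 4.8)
The plan is to bound the running time of Algorithm~\ref{algo:1} in terms of $n$, $|\cG|$, and $|\cH|$ only, and then substitute the Sauer--Shelah-type bound relating the relevant VC-dimension to $|\cH|$. The key observation is that the parameter $k$ in Algorithm~\ref{algo:1} need not be a fixed constant: the algorithm is correct for \emph{any} $k\in\mathbb{N}$ by Theorem~\ref{thm:correctness}, and its running time from the proof of Theorem~\ref{thm:main} is $O\!\left(2^k(n|\cG|)^{k+1} + n^{2k+1}|\cG|\right)$ \emph{provided} that $\VCdim(\cH)<k$, since that hypothesis is what makes $|\extk(\cH)|=O(n^k)$ and the computation of $\extk(\cH)$ run in time $O(n^{2k})$. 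So the strategy is to choose $k$ as small as possible subject to $k>\VCdim(\cH)$.

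First I would note that by the Sauer--Shelah Lemma (Lemma~\ref{lem:sauer}), if $\VCdim(\cH)=d$ then $|\cH|=\Omega$ of nothing useful directly, but conversely $|\cH|\le O(n^d)$ forces $d$ to satisfy $d = O\!\left(\log|\cH|/\log n\right)$ when $|\cH|$ is subexponential; more precisely, since a hypergraph on $n$ vertices that shatters a set of size $d$ has at least $2^d$ distinct traces on that set and hence $|\cH|\ge 2^d$, we get $\VCdim(\cH)\le \log_2|\cH|$. Thus we may run Algorithm~\ref{algo:1} with $k:=\lfloor\log_2|\cH|\rfloor+1$, which guarantees $\VCdim(\cH)<k$, so that the correctness and the complexity bound of Theorem~\ref{thm:main} both apply with this choice of $k$.

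Next I would substitute $k=O(\log|\cH|)$ into the running-time bound. The dominant terms become $2^k(n|\cG|)^{k+1}$ and $n^{2k+1}|\cG|$. With $k=O(\log|\cH|)$, each of $2^k$, $(n|\cG|)^{k+1}$, $n^{2k+1}$ is of the form $x^{O(\log|\cH|)}$ where $x$ is polynomial in the input size $N:=|\cH|+|\cG|$ (and $n\le N$); writing $N^{O(\log|\cH|)}$ and absorbing the polynomial factor in $n$ and $|\cG|$, the whole bound collapses to $(n|\cG|)^{O(\log|\cH|)}$, which is the claimed quasi-polynomial bound. (If one wants the even cleaner statement, note $|\cH|\le N$ and $|\cG|\le N$, so this is $N^{O(\log N)}$.)

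The step I expect to be the only real content is the elementary observation that $\VCdim(\cH)\le\log_2|\cH|$ and hence that $k$ can be taken logarithmic in $|\cH|$ while still respecting the hypothesis $\VCdim(\cH)<k$ needed by Proposition~\ref{prop:computeextk} and Corollary~\ref{cor:sauer}; everything else is bookkeeping on the exponents. One subtlety worth a sentence: we must make sure the two loops of Algorithm~\ref{algo:1} still terminate with the stated per-iteration costs for this non-constant $k$ --- they do, because those costs ($O(2^k n^k)$ traces, $O(n|\cG|^{k+1})$ per trace in the first loop; $|\extk(\cH)|=O(n^k)$ candidates, $O(n|\cG|)$ per candidate, plus $O(n^{2k})$ to build $\extk(\cH)$ in the second loop) were all derived under exactly the hypothesis $\VCdim(\cH)<k$, which our choice of $k$ enforces.
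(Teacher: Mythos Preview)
Your proposal is correct and follows essentially the same approach as the paper: the paper's proof is simply the observation that a shattered set of size $k$ forces $|\cH|\ge 2^k$, hence $\VCdim(\cH)=O(\log|\cH|)$, and then an appeal to Theorem~\ref{thm:main}. Your version is more verbose but the content is identical; the extra verification that the per-iteration costs survive for non-constant $k$ is harmless (it is already implicit in the statement of Theorem~\ref{thm:main}).
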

\begin{proof}
	Notice that if a set of size $k$ is shattered by $\cH$, then $\cH$ contains at least $2^{k}$ hyperedges, and so we have $\VCdim(\cH)= O(\log(\cH))$. So by Theorem \ref{thm:main}, we obtain the result.
\end{proof}

Another important property of Algorithm \ref{algo:1} is that it runs in polynomial time on hypergraphs of bounded conformality. The conformality of hypergraph was introduced in \cite{berge_hypergraphs_1989}, a hypergraph $\cH$ is said to be $k$-conformal if any minimal subset of vertices that is not included in any hyperedge of $\cH$ is of size at most $k$. In \cite{khachiyan_dualization_2007}, the authors prove that \Thyp is solvable in polynomial time in $k$-conformal hypergraphs. Although $k$-conformal hypergraphs can have arbitrarily large VC-dimension, Algorithm \ref{algo:1} runs in polynomial time for hypergraphs of bounded conformality. In the proof of Theorem \ref{thm:main} the VC-dimension is used to bound the size $\extk(\cH)$ and to be able to compute it in polynomial time. The following proposition asserts that if $\cH$ is $k$-conformal, then any hyperedge of $\extk(\cH)$ is a subset of a hyperedge of $\cH$. Hence, even though the size of $\extk(\cH)$ may be exponential in $\cH$, we don't need to compute it since the for loop in line 5 of Algorithm \ref{algo:1} only runs through hyperedges of $\extk(\cH)$ not included in any hyperedge of $\cH$.

\begin{proposition}\label{prop:conformal}
	Let $\cH$ be a $k$-conformal hypergraph. If $E\in \extk(\cH)$, there exists $F \in \cH$ such that $E\subseteq F$.
\end{proposition}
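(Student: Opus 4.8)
The plan is to prove the contrapositive-style statement directly: take $E \in \extk(\cH)$ and show that $E$ cannot be a minimal subset of $V(\cH)$ that fails to be contained in a hyperedge of $\cH$ unless $|E| \le k$, and then bootstrap from this to the full claim. The key observation linking $k$-compatibility with $k$-conformality is the following: if $E$ is \emph{not} contained in any hyperedge of $\cH$, then by $k$-conformality there is a subset $S \subseteq E$ with $|S| \le k$ which is itself minimal with the property of not being contained in any hyperedge of $\cH$. I will use such an $S$ to exhibit a $k$-trace realized by $E$ that is not in $\tracek(\cH)$, contradicting $E \in \extk(\cH)$.

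First I would handle the case $|E| \le k$ separately, or rather pad $S$ up to size exactly $k$. Given the minimal "bad" set $S \subseteq E$ with $|S| = k' \le k$, extend it arbitrarily to a set $S'$ of size exactly $k$ with $E \cap S' = S$ — this is possible as long as $|V(\cH)| \ge k$; if $E$ itself has size less than $k$ one argues on $E$ directly, or one notes that the statement is about $\extk(\cH)$ where traces are taken on $k$-subsets of $V$, so we may assume $n \ge k$. Now consider the $k$-trace $(S, S')$ realized by $E$. Since $E \in \extk(\cH)$, there must be a hyperedge $F \in \cH$ with $F \cap S' = S$; in particular $S \subseteq F$. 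But this contradicts the choice of $S$ as a subset of $V(\cH)$ not contained in any hyperedge of $\cH$. Hence no such bad $S$ exists, which means $E$ \emph{is} contained in some hyperedge $F \in \cH$, as desired.

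The step I expect to be slightly delicate is the bookkeeping around the padding of $S$ to size exactly $k$ and the boundary case $n < k$ (or $|E| < k$): one must be careful that the trace $(S,S')$ is genuinely realized by $E$, i.e. $E \cap S' = S$ and not something larger, which requires choosing the padding vertices $S' \setminus S$ outside of $E$. If $E$ and the non-hyperedges leave no room for this — essentially only in degenerate small instances — one invokes that $\extk$ and conformality are only interesting for $n \ge k$, or observes directly that a set of size $< k$ in $\extk(\cH)$ equals some $F \cap V$ trivially. Apart from that, the argument is a direct application of the definitions of $\extk(\cH)$, $k$-trace, and $k$-conformality, with the conformality hypothesis doing exactly the work of producing a bounded-size witness against $k$-compatibility.
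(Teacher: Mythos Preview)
Your approach is essentially the same as the paper's: assume $E$ is contained in no hyperedge, use $k$-conformality to extract a minimal bad subset of size at most $k$, pad it to a $k$-set, and contradict $k$-compatibility via the resulting trace.

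The one noteworthy difference is that the paper avoids the ``delicate'' boundary case you flag by padding more permissively. You insist on choosing the padding vertices $S'\setminus S$ \emph{outside} $E$ so that $E\cap S'=S$ exactly; this is what forces you to worry about whether $|V\setminus E|\ge k-|S|$. The paper instead takes \emph{any} superset $S$ of the minimal bad set $E'$ with $|S|=k$, allowing padding vertices inside $E$. The trace realized by $E$ is then $(E\cap S,\,S)$, which may strictly contain $E'$, but that is enough: any hyperedge $F$ with $F\cap S=E\cap S$ would in particular contain $E'$, contradicting the choice of $E'$. With this relaxation the only residual assumption is $n\ge k$, which is harmless. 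Note also that your hand-wave for the boundary case is aimed at the wrong regime: the obstruction to padding outside $E$ arises when $E$ is \emph{large} (few vertices outside it), not when $|E|<k$ or $n<k$, so neither of the two fixes you suggest actually addresses it. Dropping the requirement $E\cap S'=S$ in favour of $E'\subseteq E\cap S'$ resolves it cleanly.
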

\begin{proof}
	Assume that there exists $E\in \extk(\cH)$ such that $E$ is not included in any hyperedge of $\cH$. Let $E' \subseteq E$ be a minimal subset that is contained in no hyperedge of $\cH$.
	$E'$ is well defined since $E$ itself is contained in no hyperedge of $\cH$. By definition of the conformality, we have $|E'|\leq k$.
	Let $S$ be any superset of $E'$ of size $k$. Then the $k$-trace $(E\cap S,S)$ is realized by $E$ but does not belong to $\trace_k(\cH)$ since $E'\subseteq E\cap S$ while no hyperedge of $\cH$ contains $E'$. So $E$ realizes a $k$-trace which does not belongs to $\trace_k(\cH)$ which is in contradiction with $E\in \extk(\cH)$.

\end{proof}

\begin{corollary}\label{cor:conform}
	Algorithm \ref{algo:1} runs in time $O(2^k(n|\cG|)^{k+1})$ on $k$-conformal hypergraphs
\end{corollary}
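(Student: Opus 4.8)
The plan is to combine the correctness statement (Theorem \ref{thm:correctness}) with the structural fact about $\extk$ on $k$-conformal hypergraphs (Proposition \ref{prop:conformal}), and then re-examine the running-time analysis of Algorithm \ref{algo:1} line by line, replacing only the parts that previously relied on the VC-dimension bound. Concretely, the first loop (lines 2--4) never uses the VC-dimension at all: it ranges over the at most $O(2^k n^k)$ many $k$-traces $(T,S)\notin\tracek(\cH)$, and by Corollary \ref{cor:traceCheck} each test on line 3 costs $O(n|\cG|^{k+1})$, so this loop still costs $O(2^k(n|\cG|)^{k+1})$ regardless of conformality. The only place where $\VCdim(\cH)<k$ was invoked in the proof of Theorem \ref{thm:main} is to bound $|\extk(\cH)|$ and to compute it (via Proposition \ref{prop:computeextk}), which drives the cost of the second loop (lines 5--7).

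The key observation is that for a $k$-conformal $\cH$, the second loop does essentially nothing: by Proposition \ref{prop:conformal}, every $E\in\extk(\cH)$ is contained in some hyperedge $F\in\cH$, so the guard on line 5 (``$E$ is not included in any hyperedge of $\cH$'') is never satisfied and the body on lines 6--7 is never executed. Hence I do not need to generate $\extk(\cH)$ at all; the loop can simply be skipped. What remains is to argue that skipping it preserves correctness: the proof of Theorem \ref{thm:correctness} shows that a set $E\in Tr(\cG)\setminus\cH$ with $\tracek(E)\subseteq\tracek(\cH)$ would be returned on line 7 — but such an $E$ lies in $\extk(\cH)$, hence by Proposition \ref{prop:conformal} is contained in some hyperedge of $\cH$, and since $\cH$ is Sperner and $E\in Tr(\cG)$ is not itself a hyperedge of $\cH$ (as $E\notin\cH$), this is only possible if $E$ is a proper subset of a hyperedge. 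I would note that the first loop already handles this case: if $E\subsetneq F$ for $F\in\cH$, pick a minimal $E'\subseteq E$ contained in no hyperedge of $\cH$ — by $k$-conformality $|E'|\le k$ — extend $E'$ to a $k$-set $S$, and then $(E'\cap S\text{ part of }E\cap S,S)$, more precisely the $k$-trace $(E\cap S,S)$, is realized by the minimal transversal $E$ of $\cG$ but lies outside $\tracek(\cH)$, so line 3 would detect it and line 4 would return a witness. Therefore the ``Yes''/witness verdict is unchanged if the second loop is omitted on $k$-conformal inputs.

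Putting this together: on $k$-conformal hypergraphs Algorithm \ref{algo:1} (with the vacuous second loop) is correct by the above, and its running time is exactly the cost of the first loop, namely $O(2^k(n|\cG|)^{k+1})$, plus the $O(n|\cH|)$ postprocessing to turn a transversal of $\cG$ into one of $\cH$, which is absorbed. The main obstacle is the correctness bookkeeping: one must be careful that the set returned by the first loop on line 4 need not equal the hypothetical $E$ — any $E'\in Tr(\cG)$ realizing the bad trace $(T,S)$ works, exactly as already remarked at the end of the proof of Theorem \ref{thm:correctness} — and that the $k$-conformality bound $|E'|\le k$ is precisely what guarantees the bad trace has the form of a genuine $k$-trace so that the first loop actually enumerates it. Once that is checked, the complexity claim is immediate.
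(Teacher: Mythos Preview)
Your core argument is exactly the paper's: by Proposition~\ref{prop:conformal} the iteration set of the second loop is empty, so only the first loop contributes, and its cost $O(2^k(n|\cG|)^{k+1})$ was already computed in the proof of Theorem~\ref{thm:main}. The paper's proof consists of just those two sentences.

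The extra paragraph in which you re-argue correctness is unnecessary, and it also contains an error. It is unnecessary because Theorem~\ref{thm:correctness} establishes correctness of Algorithm~\ref{algo:1} for \emph{every} $k$, with no appeal to VC-dimension; since on a $k$-conformal input the second loop iterates over an empty set (rather than being removed), the algorithm's output is literally unchanged and no fresh correctness argument is required. The error is in your claim that ``the first loop already handles this case'': you posit $E\in Tr(\cG)\setminus\cH$ with $E\subsetneq F$ for some $F\in\cH$, and then ``pick a minimal $E'\subseteq E$ contained in no hyperedge of $\cH$'' --- but every subset of $E$ is contained in $F\in\cH$, so no such $E'$ exists, and the $k$-trace you build from it is undefined. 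The correct observation (which the paper does not even need to spell out) is that such an $E$ is impossible to begin with: $F\in\cH\subseteq Tr(\cG)$ would be a minimal transversal of $\cG$ strictly containing the transversal $E$, a contradiction. Equivalently, the guard on line~5 is never satisfied, which is all one needs. Drop the correctness detour and your proof coincides with the paper's.
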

\begin{proof}
	By  Proposition \ref{prop:conformal} the second for loop is empty, and only the for loop in line 2 has to be taken into account. As already shown in the proof of Theorem \ref{thm:main} it runs in time $O(2^k(n|\cG|)^{k+1})$.
\end{proof}

\section{Hypergraph classes closed under subhypergraphs}\label{sec:closed}

As already mentioned in the introduction, a consequence of Theorem \ref{thm:main} or more precisely of corollary \ref{cor:vc-bounded} is that \Thyp is solvable in polynomial time in any proper class of hypergraph closed under partial subhypergraph. Indeed, any class of hypergraphs $\mathscr{H}$ that forbids a hypergraph $\cH$ as partial subhypergraph is of bounded dimension and thus, Algorithm \ref{algo:1} can be used to solve the \Thyp problem. We also show in this section that such a result is hopeless for the other notions of subhypergraph except if one can solve the \Thyp problem in general hypergraphs in polynomial time.

\subsection{Classes closed under partial subhypergraphs.}

\begin{lemma}\label{lem:universalvc}
	If $\VCdim(\cH) = k$, then $\cH$ contains all hypergraphs on at most $k$ vertices as partial subhypergraph.
\end{lemma}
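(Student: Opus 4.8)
The plan is to show that a VC-dimension of exactly $k$ gives us a shattered set $U$ of size $k$, and that this shattered set already contains, in an appropriate sense, every hypergraph on at most $k$ vertices as a partial subhypergraph. First I would invoke the hypothesis $\VCdim(\cH)=k$ to obtain a subset $U\subseteq V(\cH)$ with $|U|=k$ that is shattered in $\cH$; by definition this means that for every $U'\subseteq U$ there is a hyperedge $F_{U'}\in\cH$ with $F_{U'}\cap U=U'$.

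Next I would fix an arbitrary hypergraph $\cK=(W,\cE(\cK))$ with $|W|\le k$ and argue that $\cK$ is isomorphic to a partial subhypergraph of $\cH$. Since $|W|\le k=|U|$, inject $W$ into $U$ and identify $W$ with a subset $W'\subseteq U$ (so $|W'|=|W|$). Now select the subset of hyperedges $\cE':=\{F_{U'} : U'\subseteq U,\ U'\cap W' \in \text{(the image of }\cE(\cK)\text{)}\}$ — more carefully, for each hyperedge $A\in\cE(\cK)$, viewing $A$ as a subset of $W'$, pick some $U'\subseteq U$ with $U'\cap W'=A$ (e.g.\ $U'=A$ itself works, but any choice does) and include the corresponding witness hyperedge $F_{U'}$ in $\cE'$. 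Then consider the partial subhypergraph of $\cH$ induced by the vertex set $W'$ and the hyperedge set $\cE'$: its hyperedges are exactly $\{F\cap W' : F\in\cE'\}$, and by construction $F_{U'}\cap W' = (F_{U'}\cap U)\cap W' = U'\cap W' = A$ for the hyperedge $A$ we chose it for. Hence the partial subhypergraph induced by $(W',\cE')$ has hyperedge set exactly (the image of) $\cE(\cK)$, so it is isomorphic to $\cK$.

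One technical point to watch, and the step I expect to be the main (minor) obstacle, is the handling of degenerate cases in the definition of partial subhypergraph used in this paper: the excerpt's definition of partial subhypergraph restricted to $V'$ keeps $F\cap V'$ only when $F\cap V'\ne\emptyset$, so an empty hyperedge of $\cK$ cannot be literally reproduced. I would either note that the statement is understood up to the usual convention that the empty hyperedge is ignored (as is standard for Sperner hypergraphs and transversal theory), or observe that since we may choose the ambient vertex set $V'$ freely as any subset of $V(\cH)$, one can absorb this by a harmless adjustment; in any case the combinatorially meaningful content — realizing every pattern of intersections on $\le k$ vertices — follows directly from shattering. The remaining verification that the map $W\to W'$ together with $\cE(\cK)\to\cE'$ is a genuine hypergraph isomorphism onto the constructed partial subhypergraph is routine once the witnesses $F_{U'}$ are in hand.
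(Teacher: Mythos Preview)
Your proof is correct and follows essentially the same approach as the paper: pick a shattered set of the right size, and for each hyperedge of the target hypergraph use a witnessing hyperedge of $\cH$ to realize the corresponding intersection. The only cosmetic difference is that the paper directly takes a shattered set of size $|W|$ (since any subset of a shattered set is shattered), whereas you take one of size $k$ and restrict to $W'\subseteq U$; your observation about the empty-hyperedge edge case is a fair nitpick on the paper's definitions but does not affect the argument.
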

\begin{proof}
	Let $\cH'=(V':=\{v_1,...,v_{|V'|}\},\cF')$ be a hypergraph with $|V'|\leq k$. Since $\VCdim(\cH)=k$, $\cH$ has a shattered set of vertices $T:=\{t_1,...,t_{|V'|}\}$ of size $|V'|$. Now since $T$ is shattered, for every $F:=\{v_{i_1},...,v_{i_\ell}\} \in \cF'$ there exists $f(F)\in \cH$ such that $f(F) \cap T = \{t_{i_1},...,t_{i_\ell}\}$ and the partial subhypergraph $(T,\{f(F)\mid F\in \cF'\})$ of $\cH$ is a copy of $\cH'$.
\end{proof}

\begin{proposition}\label{prop:partial-vc}
	Any proper class of hypergraphs $\mathscr{H}$ which is closed under partial subhypergraphs has a bounded VC-dimension.
\end{proposition}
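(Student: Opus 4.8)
The plan is to prove the contrapositive: I would show that any class that is closed under partial subhypergraphs and has \emph{unbounded} VC-dimension must in fact be the class of all hypergraphs, hence cannot be proper.

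Concretely, suppose $\mathscr{H}$ is closed under partial subhypergraphs and assume, for contradiction, that its VC-dimension is unbounded. Let $\cH'$ be an arbitrary hypergraph and put $k := |V(\cH')|$. By the unboundedness assumption there exists $\cH \in \mathscr{H}$ with $\VCdim(\cH) \geq k$. Lemma~\ref{lem:universalvc} then yields a partial subhypergraph of $\cH$ that is (isomorphic to) $\cH'$: take a shattered set $T$ of size $|V(\cH')|$ and the hyperedges $f(F)$ restricted to $T$, as in the proof of that lemma. Since $\mathscr{H}$ is closed under partial subhypergraphs, $\cH' \in \mathscr{H}$. As $\cH'$ was arbitrary and every finite hypergraph has finitely many vertices, $\mathscr{H}$ contains every hypergraph, contradicting the hypothesis that $\mathscr{H}$ is a proper class. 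Hence $\mathscr{H}$ has bounded VC-dimension.

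The only point requiring a little attention is bookkeeping with the parameter: Lemma~\ref{lem:universalvc} is phrased in terms of "all hypergraphs on at most $k$ vertices", so one must line up $k$ with the vertex count of the target hypergraph $\cH'$ rather than with its VC-dimension. Once that is done — and reading "closed under partial subhypergraphs" up to isomorphism, as is standard — the argument closes immediately. I do not expect any genuine obstacle here: the substance is entirely contained in Lemma~\ref{lem:universalvc}, and this proposition is essentially its contrapositive combined with the observation that a hypergraph is a finite object.
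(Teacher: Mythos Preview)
Your proof is correct and takes essentially the same approach as the paper: both reduce immediately to Lemma~\ref{lem:universalvc}. The only cosmetic difference is that the paper argues directly---it fixes a hypergraph $\cH'\notin\mathscr{H}$ of minimum vertex count $k$ and concludes $\VCdim(\cH)<k$ for every $\cH\in\mathscr{H}$---whereas you run the contrapositive; in particular the paper extracts an explicit bound while you only conclude boundedness, but the content is identical.
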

\begin{proof}
	Let $k$ be the minimum number of vertices of a hypergraph that is not in $\mathscr{H}$. Such a hypergraph exists since $\mathscr{H}$ is a proper class of hypergraphs (i.e., it does not contain all hypergraphs). Let us show that the VC-dimension of $\mathscr{H}$  is bounded by $k$. Indeed let $\cH\in \mathscr{H}$. Since $\mathscr{H}$ is closed under partial subhypergraph and since $\cH'\notin \mathscr{H}$, $\cH$ does not contain $\cH'$ as partial subhypergraph. Hence, by Lemma \ref{lem:universalvc}, $\VCdim(\cH)<k$.
\end{proof}

\closedHyper*

\subsection{Other subhypergraphs types}

Several types of subhypergraphs have been defined in the literature namely, \emph{partial hypergraphs}, \emph{subhypergraphs}, \emph{partial subhypergraphs}, \emph{edge-induced subhypergraphs} and \emph{restrictions}.
Even if the concepts of partial hypergraphs, subhypergraphs and partial subhypergraphs have been already defined, we recall them for comparison to the other notions. For a hypergraph $\cH=(V,\cF)$,
\begin{itemize}
	\item a \emph{partial hypergraph} of $\cH$ is a hypergraph $\cH'=(V,\cF')$, for $\cF'\subseteq \cF$;
	\item   a \emph{subhypergraph} of $\cH$ is a hypergraph $\cH'=(V',\{F\cap V' \mid F\in \cF, F\cap V'\neq \emptyset\})$, for $V'\subseteq V$;
	\item a \emph{partial subhypergraph} of $\cH$ is a hypergraph $\cH'=(V',\{F\cap V' \mid F\in \cF'\})$, for $V'\subseteq V$ and $\cF'\subseteq \cF$;
	\item an \emph{edge-induced subhypergraph} of $\cH$ is a hypergraph $\cH'=(\bigcup\limits_{F\in \cF'}F,\cF')$, for $\cF'\subseteq \cF$;
	\item a \emph{restriction} of $\cH$ is a hypergraph $\cH'=(V',\{F\in \cF \mid F \subseteq V' \})$, for $V'\subseteq V$.
\end{itemize}

We show that a result similar to the one obtained in Corollary \ref{cor:closed} for the other notions of subhypergraphs would imply a polynomial time algorithm for the general problem.
More precisely we focus on the class of hypergraphs that forbids a specific hypergraph $\cF$ as subhypergraph. Given a hypergraph $\cF$, we denote by $\mathscr{H}_\cF^{\textsc{S}}$, $\mathscr{H}_\cF^{\textsc{Es}}$, $\mathscr{H}_\cF^{\textsc{R}}$, the set of hypergraphs that forbid $\cF$ respectively as subgraph, edge-induced subgraph and restriction.
For the case of edge-induced subhypergraphs, for  any hypergraph $\cF$, we prove that the \Thyp problem in the class $\mathscr{H}_\cF^{\textsc{Es}}$ is as hard as in the general case.

For the case of restrictions and induced subhypergraphs, the difficulty of \Thyp in the classes $\mathscr{H}_\cF^{\textsc{R}}$ and  $\mathscr{H}_\cF^{\textsc{S}}$ depends on the hypergraph $\cF$. For example, if $\cF$ is the complete hypergraph on $k$ vertices (the hypergraph that contains all subsets as hyperedges), then the class $\mathscr{H}_\cF^{\textsc{S}}$ is precisely the class of hypergraphs of VC-dimension strictly smaller than $k$ and by Theorem \ref{thm:main} the problem is polynomial. However, if the hypergraph $\cF$ is sparse enough, we prove that the \Thyp problem restricted to the class $\mathscr{H}_\cF^{\textsc{S}}$ is as hard as in general hypergraphs.

For the case of restrictions, if $\cF$ is the empty hypergraph on $k$ vertices (the hypergraph without any hyperedge), then the class $\mathscr{H}_\cF^{\textsc{R}}$ is the class of hypergraph for which the maximum independent set is strictly smaller than $k$. Since maximal independent sets are the complements of minimal transversals, one can enumerate all subsets of size smaller than $k$ and check whether the complement is a minimal transversal. Therefore, if $\cF$ is the empty hypergraph, then \Thyp problem  in $\mathscr{H}_\cF^{\textsc{R}}$ is polynomial. In any other case, we prove that \Thyp problem restricted to $\mathscr{H}_\cF^{\textsc{R}}$ is as hard as the general problem.

In order to prove the results of this section, we will use two different reductions. Given a hypergraph $\cH$ and an integer $k$, we define $\hat{\cH}^{k}$ to be the hypergraph obtained from $\cH$ by adding  $k$ new vertices to every hyperedges, and $\hat{\cH}_{k}$ to be the hypergraph obtained from $\cH$ by adding $\binom{n}{k}$ new hyperedges of size $k+1$ corresponding to all $k$-subsets of vertices of $\cH$ plus a new vertex $x$.
\begin{itemize}
	\item  $\hat{\cH}^{k} = (V(\cH)\cup \{x_1,...,x_k\}, \{E\cup \{x_1,...,x_k\} \mid E \in \cE(\cH)\})$
	\item $\hat{\cH}_{k} = (V(\cH)\cup \{x\}, \cE(\cH) \cup \{X\cup \{x\} \mid X \subseteq V(\cH), |X| \leq k\})$
\end{itemize}

We now show how the enumeration of minimal transversal of $\cH$ can be reduced to the enumeration of minimal transversal of $\hat{\cH}^{k}$ and $\hat{\cH}_{k}$.

\begin{proposition}\label{prop:hatup}
	Let $k$ be an integer and $\cH$ be a hypergraph,  then
	$Tr(\hat{\cH}^{k}) = Tr(\cH) \cup \{\{x\} \mid x\in V(\hat{\cH}^{k})\setminus V(\cH)\}$.
\end{proposition}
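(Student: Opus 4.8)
The plan is to classify the minimal transversals of $\hat{\cH}^{k}$ according to whether they meet the set $X := \{x_1,\dots,x_k\} = V(\hat{\cH}^{k}) \setminus V(\cH)$ of newly added vertices. The key structural observation driving everything is that \emph{every} hyperedge of $\hat{\cH}^{k}$, being of the form $E \cup X$ with $E \in \cE(\cH)$, contains all of $X$.

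First I would handle the new singletons. For each $x_i \in X$, the set $\{x_i\}$ is a transversal of $\hat{\cH}^{k}$, since $x_i \in E \cup X$ for every $E \in \cE(\cH)$; being a singleton it is minimal, so $\{x_i\} \in Tr(\hat{\cH}^{k})$. This already gives the inclusion $\{\{x\} \mid x \in X\} \subseteq Tr(\hat{\cH}^{k})$. Moreover, it forces the converse direction for transversals that touch $X$: if $T \in Tr(\hat{\cH}^{k})$ and $x_i \in T$ for some $i$, then $\{x_i\} \subseteq T$ is itself a transversal, so minimality of $T$ gives $T = \{x_i\}$.

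Next I would treat the transversals disjoint from $X$. If $T \in Tr(\hat{\cH}^{k})$ with $T \cap X = \emptyset$, then $T \subseteq V(\cH)$, and since $T$ misses $X$ the condition $T \cap (E \cup X) \neq \emptyset$ is equivalent to $T \cap E \neq \emptyset$; hence $T$ is a transversal of $\cH$. For minimality, note that any $T' \subsetneq T$ again satisfies $T' \cap X = \emptyset$, so by the same equivalence $T'$ is a transversal of $\hat{\cH}^{k}$ iff it is a transversal of $\cH$; thus $T$ minimal in $\hat{\cH}^{k}$ implies $T$ minimal in $\cH$, i.e. $T \in Tr(\cH)$. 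The same equivalence delivers the reverse inclusion: if $T \in Tr(\cH)$ then $T$ is a transversal of $\hat{\cH}^{k}$, and any $T' \subsetneq T$ would (being disjoint from $X$) be a transversal of $\cH$, contradicting minimality of $T$; hence $T \in Tr(\hat{\cH}^{k})$.

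Combining the two cases, every element of $Tr(\hat{\cH}^{k})$ is either a singleton $\{x_i\}$ with $x_i \in X$ or an element of $Tr(\cH)$, and conversely all such sets belong to $Tr(\hat{\cH}^{k})$, which is precisely the claimed equality. I do not expect a real obstacle in this argument; the only point needing a little care is that minimality transfers in both directions, and this works exactly because a proper subset of a set disjoint from $X$ stays disjoint from $X$, so the "is it a transversal" test does not change when passing between $\cH$ and $\hat{\cH}^{k}$.
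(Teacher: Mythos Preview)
Your proof is correct and follows essentially the same approach as the paper: case analysis on whether a minimal transversal of $\hat{\cH}^{k}$ meets the set $X$ of new vertices, using that every hyperedge contains all of $X$. Your write-up is slightly more explicit about why minimality transfers in both directions for transversals disjoint from $X$, but the underlying argument is identical.
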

\begin{proof}
	Let $T\in Tr(\hat{\cH}^{k})$. Then either $T\subseteq V(\cH)$ and $T$ is also a minimal transversal of $\cH$ or $T$ contains at least one vertex  $x\in V(\hat{\cH}^{k}) \setminus V(\cH)$. Since $x$ belongs to every hyperedges of $\hat{\cH}^{k}$ $\{x\}$ is a transversal and by minimality of $T$ we have  $T=\{x\}$. So $Tr(\hat{\cH}^{k}) \subseteq Tr(\cH) \cup \{\{x\} \mid x\in V(\hat{\cH}^{k})\setminus V(\cH)\}$. Now since every minimal transversal of $\cH$ is also a minimal transversal of $\hat{\cH}^{k}$ and any $x\in V(\hat{\cH}^{k}) \setminus V(\cH)$ belongs to every hyperedge of  $\hat{\cH}^{k}$, we have $Tr(\hat{\cH}^{k}) \supseteq Tr(\cH) \cup \{\{x\} \mid x\in V(\hat{\cH}^{k})\setminus V(\cH)\}$.
\end{proof}

\begin{proposition}\label{prop:hatdown}
	Let $k$ be an integer, $\cH$ be a hypergraph, then
	$Tr(\hat{\cH}_{k}) = \{T\cup\{x\} \mid T \in Tr(\cH), |T| < n-k+1\} \cup \cJ$ where $\cJ$ only contains subsets of size larger than $n-k+1$.
\end{proposition}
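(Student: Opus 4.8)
The plan is to split $Tr(\hat{\cH}_{k})$ according to whether or not a minimal transversal contains the new vertex $x$, and to take $\cJ := \{U \in Tr(\hat{\cH}_{k}) \mid x\notin U\}$. Throughout write $V:=V(\cH)$, $n:=|V|$, and recall that the hyperedges of $\hat{\cH}_{k}$ are those of $\cH$ together with the sets $X\cup\{x\}$ for every $k$-subset $X\subseteq V$. The only structural fact I need is the following observation: $x$ belongs to no hyperedge inherited from $\cH$, so for $U\subseteq V(\hat{\cH}_{k})$, (i) $U$ meets every $F\in\cE(\cH)$ iff $U\cap V$ is a transversal of $\cH$, and (ii) $U$ meets every set $X\cup\{x\}$ iff either $x\in U$ or $U\cap V$ meets every $k$-subset of $V$, the latter being equivalent to $|V\setminus(U\cap V)|<k$.

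First I would handle the minimal transversals that contain $x$, the claim being that they are exactly the sets $T\cup\{x\}$ with $T\in Tr(\cH)$ and $|T|<n-k+1$. Given $U=T\cup\{x\}$ with $T\subseteq V$, the observation shows $U$ is a transversal of $\hat{\cH}_{k}$ iff $T$ is a transversal of $\cH$. For inclusion-minimality of such a $U$: removing an element $v\in T$ leaves every $X\cup\{x\}$ hit but requires $T\setminus\{v\}$ to remain a transversal of $\cH$, which fails for all $v$ precisely when $T\in Tr(\cH)$; and removing $x$ leaves $T$, which is a transversal of $\hat{\cH}_{k}$ iff $T$ meets every $k$-subset of $V$, i.e. iff $|T|\ge n-k+1$. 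Combining, $T\cup\{x\}\in Tr(\hat{\cH}_{k})$ iff $T\in Tr(\cH)$ and $|T|<n-k+1$. A one-line check confirms such sets are \emph{globally} inclusion-minimal and not merely minimal among transversals containing $x$: a proper subset of $T\cup\{x\}$ that omits $x$ is contained in $T$, hence is either $T$ itself (not a transversal of $\hat{\cH}_{k}$, since $|T|<n-k+1$) or a proper subset of $T$ (not a transversal of $\cH$); and a proper subset that keeps $x$ restricts $T$ to a non-transversal of $\cH$.

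Second, for $U\in Tr(\hat{\cH}_{k})$ with $x\notin U$ I would invoke (ii): $U\subseteq V$ must then meet every $k$-subset of $V$, so $|V\setminus U|<k$, i.e. $|U|\ge n-k+1$; hence every element of $\cJ$ has size at least $n-k+1$. Finally, since a set containing $x$ and a set avoiding $x$ are never comparable under inclusion, partitioning $Tr(\hat{\cH}_{k})$ by membership of $x$ and combining the two previous paragraphs gives $Tr(\hat{\cH}_{k})=\{T\cup\{x\}\mid T\in Tr(\cH),\ |T|<n-k+1\}\cup\cJ$, with $\cJ$ consisting only of sets of size at least $n-k+1$, which is the assertion.

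I do not expect a genuine obstacle here: the construction is rigged so that $x$ neutralises the added hyperedges, making the case split clean. The only mild points of care are to argue global minimality rather than minimality inside each of the two classes (immediate from the incomparability remark) and to record that the size bound one actually obtains for $\cJ$ is ``at least $n-k+1$'' (equivalently, strictly larger than $n-k$).
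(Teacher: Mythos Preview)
Your proof is correct and follows essentially the same route as the paper: split $Tr(\hat{\cH}_{k})$ according to whether $x$ belongs to the transversal, use that $x$ hits exactly the added hyperedges while a set avoiding $x$ must meet every $k$-subset of $V$, and characterise each side. Your write-up is in fact a bit more careful than the paper's---you explicitly verify the size constraint $|T|<n-k+1$ in the $x\in U$ case and address global minimality---and your closing remark that the bound one actually proves for $\cJ$ is ``at least $n-k+1$'' matches what the paper's own argument yields.
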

\begin{proof}
	Let $T\in Tr(\hat{\cH}_{k})$. Assume first that the only vertex $x\in V(\hat{\cH}_{k}) \setminus V(\cH)$ belongs to $T$. Then $T\setminus \{x\}$ must minimally cover the hyperedges that don't contain $x$ i.e. $T\setminus \{x\}$ must be a minimal transversal of $\cH$ since the hyperedges that contain $x$ are exactly $\cE(\hat{\cH}_{k}) \setminus \cE(\cH)$. Assume now that $x\notin T$ and assume that $|T|< n-k +1$. Let $X \subseteq V(\cH) \setminus T$ be a subset of vertices included in the complement of $T$ such that $|X|=k$, then $X\cup\{x\}$ is a hyperedge of $\hat{\cH}_{k}$ whose intersection with $T$ would be empty contradicting the fact that $T$ is a transversal of $\hat{\cH}_{k}$. So if $T\in Tr(\hat{\cH}_{k})$ then either $x\in T$ and  $T\setminus \{x\}$ is a minimal transversal of $\cH$ or $|T| \geq n-k+1$. Assume now that $T\in Tr(\cH)$ with $|T|< n-k+1$ and let us show that $T\cup \{x\} \in Tr(\hat{\cH}_{k})$. Since $T$ is a minimal transversal of $\cH$ and since $x$ belongs to every hyperedge of $\cE(\hat{\cH}_{k}) \setminus \cE(\cH)$, $T\cup\{x\}$ is a transversal of $\hat{\cH}_{k}$. Let us show that it is minimal. Observe first that since $|T|< n-k+1$, $T$ cannot be a transversal since its complement contains at least one $k$-subset $X$ of $V(\cH)$ and the hyperedge  $X\cup \{x\}$ would be avoided by $T$. Now if there exists $T'\subset T$ such that $T'\cup\{x\}\in Tr(\hat{\cH}_{k})$, since $x$ does not belong to any original hyperedge of $\cH$, $T'$ would be a transversal of $\cH$ contradicting the minimality of $T$.
\end{proof}

\begin{lemma}\label{lem:sub_hypergraph}
	Let $\cF$ be a fixed  hypergraph on $k$ vertices,  and let $\cH$ be a hypergraph on at least $k$ vertices, then :
	\begin{enumerate}
		\item  $\hat{\cH}^{k+1}$ is edge-induced $\cF$-free.
		\item  If $\cF$ has at least one hyperedge, $\hat{\cH}^{k+1}$ is restriction $\cF$-free.
		\item  If there exists $\ell \leq k$ such that $\cF$ has strictly less than $\binom{k-1}{\ell-1}$ hyperedges of size $\ell$, then $\hat{\cH}_{\ell}$ is subhypergraph $\cF$-free.
	\end{enumerate}
\end{lemma}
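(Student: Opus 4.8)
The three items rest on a single principle: each of the two gadgets plants a small \emph{rigid core} into many hyperedges, forcing any subhypergraph (of the relevant flavour) that could host a copy of $\cF$ either to have too many vertices or to carry too many hyperedges of one fixed size. The plan is to treat the items in order, in each case first isolating the rigidity property of the gadget and then reading off the obstruction. Throughout I use only two features of a hypergraph isomorphism: it preserves the number of vertices, and it preserves the number of hyperedges of each size.

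For items 1 and 2 the rigidity is that the $k+1$ fresh vertices $x_1,\dots,x_{k+1}$ lie in \emph{every} hyperedge of $\hat{\cH}^{k+1}$. For item 1, an edge-induced subhypergraph is $(\bigcup_{F\in\cF'}F,\cF')$ for some $\cF'\subseteq\cE(\hat{\cH}^{k+1})$; if $\cF'\neq\emptyset$ the vertex set contains all of $x_1,\dots,x_{k+1}$, hence has at least $k+1>k$ vertices and cannot be isomorphic to $\cF$, while if $\cF'=\emptyset$ it is the empty hypergraph on no vertices, again not $\cF$ (the degenerate case $k=0$ being implicitly excluded). For item 2, a restriction $(V',\{F\in\cE(\hat{\cH}^{k+1}):F\subseteq V'\})$ that has any hyperedge $F$ must satisfy $V'\supseteq\{x_1,\dots,x_{k+1}\}$, so $|V'|>k$; and if it has no hyperedge it is edgeless, hence not isomorphic to $\cF$, which has a hyperedge by hypothesis. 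Either way $\cF$ cannot occur.

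For item 3 I would argue by contradiction. Suppose a subhypergraph of $\hat{\cH}_\ell$ induced by a vertex set $V'$ is isomorphic to $\cF$; then $|V'|=k$, and we may assume $1\le\ell\le k$ since for $\ell=0$ the threshold $\binom{k-1}{-1}=0$ makes the hypothesis vacuous. Split on whether the fresh vertex $x$ belongs to $V'$. If $x\notin V'$ then $V'\subseteq V(\cH)$, and for every $\ell$-subset $Y\subseteq V'$ the hyperedge $Y\cup\{x\}$ of $\hat{\cH}_\ell$ satisfies $(Y\cup\{x\})\cap V'=Y$, so $Y$ is a hyperedge of the induced subhypergraph; this produces $\binom{k}{\ell}$ distinct hyperedges of size $\ell$. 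If $x\in V'$, put $V''=V'\setminus\{x\}$, with $|V''|=k-1$; for every $(\ell-1)$-subset $Z\subseteq V''$ the set $Z\cup\{x\}$ is a hyperedge of $\hat{\cH}_\ell$ (since $Z\subseteq V(\cH)$ and $|Z|=\ell-1\le\ell$) and $(Z\cup\{x\})\cap V'=Z\cup\{x\}$ has size $\ell$, giving $\binom{k-1}{\ell-1}$ distinct hyperedges of size $\ell$ in the subhypergraph. Since $\binom{k}{\ell}\ge\binom{k-1}{\ell-1}$, in both cases the subhypergraph has at least $\binom{k-1}{\ell-1}$ hyperedges of size exactly $\ell$, whereas $\cF$ has strictly fewer; this contradicts the fact that an isomorphism preserves the number of size-$\ell$ hyperedges.

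I do not expect a real obstacle: once the rigidity property is named, each item is bookkeeping. The one point that deserves care is the case $x\in V'$ of item 3 — one must notice that a copy of $\cF$ on a $k$-set that uses the universal vertex $x$ leaves only $k-1$ \emph{free} vertices, and that the hyperedges of $\hat{\cH}_\ell$ through $x$ restrict exactly to all $(\ell-1)$-subsets of those $k-1$ vertices; this is precisely why the correct threshold is $\binom{k-1}{\ell-1}$ and why $\hat{\cH}_\ell$ is built with hyperedges of size up to $\ell$ through $x$ rather than exactly $\ell$. The only other thing to watch are the degenerate regimes ($\ell=0$, or $\cF$ edgeless or with isolated vertices), each disposable in a sentence.
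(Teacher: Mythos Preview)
Your argument is correct and follows the same strategy as the paper for all three items: items~1 and~2 are identical to the paper's proof, and item~3 uses the same counting of size-$\ell$ hyperedges in the induced subhypergraph, split according to whether the fresh vertex $x$ lies in $V'$.

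The one noteworthy divergence is in the sub-case $x\in V'$ of item~3. You produce the size-$\ell$ hyperedges directly as $Z\cup\{x\}$ for each $(\ell-1)$-subset $Z\subseteq V'\setminus\{x\}$, invoking the formal definition of $\hat{\cH}_\ell$ in which the added hyperedges are $X\cup\{x\}$ with $|X|\le\ell$. The paper instead picks an auxiliary vertex $y\in V(\cH)\setminus V'$ and uses the hyperedge $Z\cup\{y\}\cup\{x\}$, whose trace on $V'$ is $Z\cup\{x\}$. The paper's variant works whether the added hyperedges have size exactly $\ell+1$ (as in the informal description preceding the definition) or at most $\ell+1$, and it is precisely where the hypothesis ``$\cH$ has at least $k$ vertices'' is used, namely to guarantee the existence of $y$. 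Your version is slightly shorter but hinges on the $\le$ reading of the definition and leaves that hypothesis unused; you may want to flag this if the intended definition is the exact one.
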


\begin{proof}
	1) Since every hyperedges of $\hat{\cH}^{k+1}$ contains the set $X:=\{x_1,...,x_{k+1}\}$ of additional vertices, any edge-induced subgraph of $\hat{\cH}^{k+1}$ contains at least these $k+1$ vertices. Since $\cF$ has only $k$ vertices, no edge-induced subgraph of $\hat{\cH}^{k+1}$ can be isomorphic to $\cF$.

	2) Let $U\subseteq V(\cH)$. Since $X$  is included in every hyperedges of $\hat{\cH}^{k+1}$, if $X\not\subseteq U$, the restriction of  $\hat{\cH}^{k+1}$ to $U$ contains no hyperedge. So any non empty restriction of $\hat{\cH}^{k+1}$ must contain $X$ and so contains at least $k+1$ vertices. Since $\cF$ has exactly $k$ vertices, $\hat{\cH}^{k+1}$ is restriction $\cF$-free.

	3) Assume that there exists $\ell\leq k$ such that $\cF$ has strictly less than $\binom{k-1}{\ell-1}$ hyperedges of size $\ell$ and let $U\subseteq V(\cH)$ be a subset of size $k$. Let $x$ be the extra vertex added in $V(\hat{\cH}_{\ell+1})$ not present in $V(\cH)$.  If $U$ does not contain $x$, any subset $E$ of $U$ of size $\ell$ will  forms a hyperedge with $x$ . Since $x$ does not belong to $E$, $E$ is an hyperedge of the subhypergraph induced by $U$. So the subhypergraph induced by $U$ contains  $\binom{k}{\ell}\geq\binom{k-1}{\ell-1} $ hyperedges of size $\ell$ and cannot be isomorphic to $\cF$. Assume now that $x\in U$. Since we assumed that $\cH$ has at least $k$ vertices,  $\hat{\cH}_{\ell}$ has at least $k+1$ vertices  and there exists a vertex $y\notin U$. Now for any subset $E$ of $U\setminus \{x\}$, of size $\ell -1$, there exists a hyperedge $E\cup \{x,y\}$ in $\hat{\cH}_{\ell}$ and so the subhypergraph induced by $U$ has $E\cup\{x\}$ as hyperedge. So the subhypergraph induced by $U$ contains at least $\binom{k-1}{\ell-1}$ hyperedges of size $\ell$ and cannot be isomorphic to $\cF$.
\end{proof}


\begin{theorem}\label{thm:sub_hypergraph}
	Let $\cF$ be a hypergraph, then the restriction of the \Thyp problem to the following classes of hypergraph is as hard as the \Thyp problem in general hypergraphs:
	\begin{itemize}
		\item  $\mathscr{H}_\cF^{\textsc{Es}}$
		\item  $\mathscr{H}_\cF^{\textsc{R}}$ if $\cF$ is different from the empty hypergraph.
		\item  $\mathscr{H}_\cF^{\textsc{S}}$ if there exists $\ell \leq |V(\cF)|$ such that $\cF$ has less than $\binom{|V(\cF)| - 1}{\ell-1}$ hyperedges of size $\ell$.
	\end{itemize}
\end{theorem}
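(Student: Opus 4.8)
The plan is to exhibit, for each of the three classes, a polynomial-time reduction from \Thyp on arbitrary hypergraphs to \Thyp restricted to that class; together with the assumed tractability of the latter this forces \Thyp to be tractable in general. Write $k:=|V(\cF)|$. Given an instance $(\cH,\cG)$ of \Thyp we may assume $\cG\subseteq Tr(\cH)$ (this is part of the specification, and is verifiable in polynomial time anyway), that every hyperedge of $\cH$ is nonempty, and that $\cH$ has at least $k$ vertices (for smaller $\cH$ the problem is trivial, and isolated vertices may be added without changing $Tr(\cH)$). The two gadgets $\hat\cH^{m}$ and $\hat\cH_{m}$ are tailored to this: Lemma~\ref{lem:sub_hypergraph} certifies that the gadget lies in the desired class, while Propositions~\ref{prop:hatup}--\ref{prop:hatdown} pin down its transversal hypergraph finely enough that we can produce a companion hypergraph $\hat\cG$ with $\hat\cG\subseteq Tr(\text{gadget})$ and with $(\text{gadget},\hat\cG)$ dual iff $(\cH,\cG)$ is.

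For $\mathscr{H}_\cF^{\textsc{Es}}$, and for $\mathscr{H}_\cF^{\textsc{R}}$ when $\cF$ has a hyperedge, I would use the single gadget $\hat\cH^{k+1}$. By Lemma~\ref{lem:sub_hypergraph}(1) and (2) it lies in the relevant class, and by Proposition~\ref{prop:hatup} we have $Tr(\hat\cH^{k+1})=Tr(\cH)\cup\{\{z\}:z\in V(\hat\cH^{k+1})\setminus V(\cH)\}$. So I put $\hat\cG:=\cG\cup\{\{z\}:z\in V(\hat\cH^{k+1})\setminus V(\cH)\}$; then $\hat\cG\subseteq Tr(\hat\cH^{k+1})$, the pair $(\hat\cH^{k+1},\hat\cG)$ is dual iff $(\cH,\cG)$ is, and any $R\in Tr(\hat\cH^{k+1})\setminus\hat\cG$ avoids the new singleton transversals (those already lie in $\hat\cG$), hence is a genuine element of $Tr(\cH)\setminus\cG$. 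Thus an algorithm for the restricted problem solves the original instance, which settles the first two items.

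The subhypergraph case $\mathscr{H}_\cF^{\textsc{S}}$ is the delicate one, because $\hat\cH_{\ell}$ by itself is not enough: Proposition~\ref{prop:hatdown} describes $Tr(\hat\cH_{\ell})$ only up to a family $\cJ$ of minimal transversals of large size, which we cannot enumerate without essentially solving the problem. The remedy is to first apply $\hat\cH^{m}$ so that no large minimal transversal survives. Put $m:=k$, let $\cH':=\hat\cH^{m}$ (on $n+m$ vertices, with $m\ge\ell$ since $\ell\le k$), and let $\cH''$ be the hypergraph $\widehat{\cH'}_{\ell}$ obtained by applying the second construction to $\cH'$ with parameter $\ell$. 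Every minimal transversal of $\cH'$ has size at most $n<(n+m)-\ell+1$, so applying Proposition~\ref{prop:hatdown} to $\cH'$ loses none of $Tr(\cH')$; moreover a direct computation of the remainder (every hyperedge of $\cH'$ contains all $m$ new vertices, so any set missing at most $\ell-1$ vertices of $V(\cH')$ already meets every hyperedge of $\cH'$) shows that
\[
  Tr(\cH'')\;=\;\{\,T\cup\{y\}:T\in Tr(\cH)\,\}\;\cup\;\cK,
\]
where $y$ is the new vertex of $\cH''$ and $\cK$ consists of the $m$ pairs $\{x_i,y\}$ together with all subsets of $V(\cH')$ obtained by deleting $\ell-1$ vertices. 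The family $\cK$ is disjoint from $\{T\cup\{y\}:T\in Tr(\cH)\}$, has size $O(n^{\ell-1})$ since $\ell\le k$ is a constant, and is computable in polynomial time; by Lemma~\ref{lem:sub_hypergraph}(3), $\cH''\in\mathscr{H}_\cF^{\textsc{S}}$ (and $\cH''$ is still Sperner, using $m\ge\ell$). Now I set $\hat\cG:=\{T\cup\{y\}:T\in\cG\}\cup\cK$. Then $\hat\cG\subseteq Tr(\cH'')$, the pair $(\cH'',\hat\cG)$ is dual iff $(\cH,\cG)$ is, and any $R\in Tr(\cH'')\setminus\hat\cG$ must be of the form $T\cup\{y\}$ with $T\in Tr(\cH)\setminus\cG$. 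Hence a polynomial-time algorithm for \Thyp on $\mathscr{H}_\cF^{\textsc{S}}$ yields one for general \Thyp.

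The only genuine obstacle is the one just highlighted: controlling the remainder $\cJ$ of Proposition~\ref{prop:hatdown} in the subhypergraph case, which is precisely what forces the two-step gadget $\widehat{(\hat\cH^{m})}_{\ell}$. The remaining ingredients --- membership in the class, which is exactly Lemma~\ref{lem:sub_hypergraph}; the inclusion $\hat\cG\subseteq Tr(\text{gadget})$; and translating a returned counterexample back to $\cH$ --- are routine bookkeeping, and all gadgets and companion hypergraphs have polynomial size and are computed in polynomial time because $k$ is fixed.
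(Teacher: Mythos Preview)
Your argument is correct, and for the first two items it coincides with the paper's: both use $\hat\cH^{k+1}$ together with Proposition~\ref{prop:hatup} and Lemma~\ref{lem:sub_hypergraph}(1)--(2). The difference is in the packaging --- you phrase everything as a reduction between \Thyp instances (building the companion $\hat\cG$ explicitly), whereas the paper phrases it as a reduction between output-polynomial enumeration problems; these are equivalent, so this is only cosmetic.

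For the subhypergraph case the two proofs genuinely diverge. The paper applies the single gadget $\hat\cH_\ell$ directly: by Proposition~\ref{prop:hatdown} this recovers every $T\in Tr(\cH)$ with $|T|<n-\ell+1$, and the remaining minimal transversals of $\cH$ (those of size at least $n-\ell+1$) are handled by a brute-force scan over the $O(n^{\ell-1})$ candidate sets; the residual family $\cJ$ also has size $O(n^{\ell})$, so the output blow-up stays polynomial. Your route instead composes the two gadgets, forming $\cH''=\widehat{(\hat\cH^{k})}_{\ell}$, which has the pleasant effect that \emph{every} $T\in Tr(\cH)$ survives into $Tr(\cH'')$ and the leftover family $\cK$ is describable in closed form. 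Your stated motivation --- that one ``cannot enumerate $\cJ$ without essentially solving the problem'' --- is not quite accurate, since $\cJ$ has polynomial size and is checkable; what your two-step construction actually buys is that the companion $\hat\cG$ can be written down without any side case-analysis on the sizes of transversals in $\cG$, yielding a cleaner \Thyp-to-\Thyp reduction. Either approach proves the theorem.
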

\begin{proof}
	By lemma \ref{lem:sub_hypergraph} it is sufficient to prove that for a given integer $k$, one can reduce in polynomial time the enumeration of minimal transversal of $\cH$ to the enumeration of minimal transversal of $\hat{\cH}^{k}$ and $\hat{\cH}_{k}$.

	Assume that one can enumerate the minimal transversal of  $\hat{\cH}^{k}$ in $O((|\hat{\cH}^{k}| + |Tr(\hat{\cH}^{k})|)^{\ell})$. By lemma \ref{prop:hatup},  $Tr(\hat{\cH}^{k}) = Tr(\cH) \cup \{\{x\} \mid x\in V(\hat{\cH}^{k})\setminus V(\cH)\}$. So one can simply check for each $T\in Tr(\hat{\cH}^{k})$ whether $T$ is a minimal transversal of $\cH$. Since $|\hat{\cH}^{k}| = |\cH|$ and  $|Tr(\hat{\cH}^{k})| = |Tr(\cH)| + k$, the total running time is $O((|\cH|+|Tr(\cH)|)^{\ell})$.

	Assume now that one can enumerate the minimal transversal of  $\hat{\cH}_{k}$ in $O((|\hat{\cH}^{k}| + |Tr(\hat{\cH}^{k})|)^{\ell})$.  By lemma \ref{prop:hatdown}, $Tr(\hat{\cH}_{k}) = \{T\cup\{x\} \mid T \in Tr(\cH), |T|<n-k+1\} \cup \cJ$ where $\cJ$ only contains subsets of size larger than $n-k+1$. One can start by enumerating all minimal transversal of $\cH$ of size larger than $n-k+1$ in a brute-force way by checking all the $O(n^{k-1})$ such subsets. Then for every $T\in Tr(\hat{\cH}_{k})$ with $|T|\leq n-k+1$ that contains $x$, output $T\setminus \{x\}$ (where $x$ is the additional vertex of $V(\hat{\cH}^{k})\setminus V(\cH)$). Since $|\hat{\cH}_{k}| = O(|\cH| + n^k)$ and $|Tr(\hat{\cH}_{k})| = O(|Tr(\cH)| + n^k)$, the total running time is $O(|\cH| + |Tr(\cH)|)^{\ell k}$.

\end{proof}

\section{Discussions}
Although the existence of a polynomial-time algorithm for \Thyp remains a widely open problem, we show that \Thyp can be solved in polynomial time whenever one of the two input hypergraphs has bounded VC-dimension. This result generalizes many existing tractable cases and yields a new quasi-polynomial-time algorithm for general hypergraphs. 

A natural question is whether \Thyp, parameterized by the minimum VC-dimension of the two input hypergraphs, is fixed-parameter tractable (FPT). From the viewpoint of parameterized complexity, our result shows that the problem is in XP (i.e., solvable in time $N^{f(k)}$ for some function $f$, where $N$ is the total input size), but the existence of an FPT algorithm remains open.

\noindent\textbf{Open problem 1:} Given $\cH$ and $\cG$ with $\VCdim(\cH)<k$, can we decide in time $f(k)\cdot p(|\cH| + |\cG|)$ whether $\cH$ and $\cG$ are dual, where $f$ is a computable function and $p$ is a polynomial?

As a weaker question, can one remove the exponential dependence on $k$ from the contribution of $|\cG|$ in the running time? This would make sense from the perspective of the enumeration problem (\Tenum), since the exponential dependence on $k$ would then be carried only by the input size. Since, by Lemma \ref{lem:sauer}, the number of hyperedges of $\cH$ is polynomially bounded in $n$, this amounts to asking whether the exponential dependence on $k$ can be confined to the number of vertices.

\noindent\textbf{Open problem 2:} Given $\cH$ and $\cG$ with $\VCdim(\cH)<k$, can we decide in time $n^{f(k)}\cdot p(|\cH| + |\cG|)$ whether $\cH$ and $\cG$ are dual, where $f$ is a computable function, $p$ is a polynomial, and $n$ is the number of vertices of $\cH$ (equivalently, of $\cG$)?

Finally, for which hypergraphs $\cF$ can \Tenum, restricted to the class $\mathscr{H}_\cF^{\textsc{S}}$ of hypergraphs forbidding $\cF$ as an induced subhypergraph, be solved in polynomial time? By Theorem \ref{thm:sub_hypergraph}, a positive answer for sparse hypergraphs $\cF$ would imply a polynomial-time algorithm in the general case, but the question remains open for dense hypergraphs.

\bibliographystyle{plain}
\bibliography{bibVC}

@article{bioch_complexity_1995,
    author = "Bioch, J.C. and Ibaraki, T.",
    title = "Complexity of Identification and Dualization of Positive Boolean Functions",
    volume = "123",
    issn = "0890-5401",
    url = "http://dx.doi.org/10.1006/inco.1995.1157",
    doi = "10.1006/inco.1995.1157",
    number = "1",
    journal = "Information and Computation",
    month = "November",
    year = "1995",
    pages = "50--63",
    file = "Version soumise:/home/etudiant/sync/papier/Zotero/storage/CBBRYC8V/Bioch et Ibaraki - 1995 - Complexity of Identification and Dualization of Po.pdf:application/pdf"
}

@article{boros_dual_1998,
    author = "Boros, Endre and Gurvich, Vladimir and Hammer, Peter L.",
    title = "Dual subimplicants of positive Boolean functions",
    volume = "10",
    issn = "1055-6788",
    url = "http://dx.doi.org/10.1080/10556789808805708",
    doi = "10.1080/10556789808805708",
    number = "2",
    journal = "Optimization Methods and Software",
    year = "1998",
    pages = "147--156",
    file = "Version soumise:/home/etudiant/sync/papier/Zotero/storage/65L26PZM/Boros et al. - 1998 - Dual subimplicants of positive Boolean functions.pdf:application/pdf"
}

@article{eiter_computational_2008,
    author = "Eiter, Thomas and Makino, Kazuhisa and Gottlob, Georg",
    title = "Computational aspects of monotone dualization: A brief survey",
    volume = "156",
    issn = "0166-218X",
    url = "http://dx.doi.org/10.1016/j.dam.2007.04.017",
    doi = "10.1016/j.dam.2007.04.017",
    number = "11",
    journal = "Discrete Applied Mathematics",
    month = "June",
    year = "2008",
    keywords = "Enumeration",
    pages = "2035--2049",
    file = "Version soumise:/home/etudiant/sync/papier/Zotero/storage/47MWX5ZE/Eiter et al. - 2008 - Computational aspects of monotone dualization A b.pdf:application/pdf"
}

@article{fredman_complexity_1996,
    author = "Fredman, Michael L. and Khachiyan, Leonid",
    title = "On the Complexity of Dualization of Monotone Disjunctive Normal Forms",
    volume = "21",
    issn = "0196-6774",
    url = "http://dx.doi.org/10.1006/jagm.1996.0062",
    doi = "10.1006/jagm.1996.0062",
    number = "3",
    journal = "Journal of Algorithms",
    month = "November",
    year = "1996",
    pages = "618--628",
    file = "Fredman et Khachiyan - 1996 - On the Complexity of Dualization of Monotone Disju.pdf:/home/etudiant/sync/papier/Zotero/storage/GJPPNL75/Fredman et Khachiyan - 1996 - On the Complexity of Dualization of Monotone Disju.pdf:application/pdf"
}

@article{takata_worst-case_2008,
    author = "Takata, Ken",
    title = "A Worst-Case Analysis of the Sequential Method to List the Minimal Hitting Sets of a Hypergraph",
    volume = "21",
    issn = "0895-4801",
    url = "http://dx.doi.org/10.1137/060653032",
    doi = "10.1137/060653032",
    number = "4",
    journal = "SIAM Journal on Discrete Mathematics",
    month = "January",
    year = "2008",
    keywords = "Enumeration",
    pages = "936--946",
    file = "A Worst-Case Analysis of the Sequential Method to List the Minimal Hitting Sets of a Hypergraph:/home/etudiant/sync/papier/Zotero/storage/ZRLYM76D/takata2008.pdf:application/pdf"
}

@incollection{goos_hypergraph_2002,
    author = "Eiter, Thomas and Gottlob, Georg",
    editor = "Goos, G. and Hartmanis, J. and van Leeuwen, J. and Flesca, Sergio and Greco, Sergio and Ianni, Giovambattista and Leone, Nicola",
    address = "Berlin, Heidelberg",
    title = "Hypergraph {Transversal} {Computation} and {Related} {Problems} in {Logic} and {AI}",
    volume = "2424",
    isbn = "978-3-540-44190-8 978-3-540-45757-2",
    url = "http://link.springer.com/10.1007/3-540-45757-7\_53",
    abstract = "Generating minimal transversals of a hypergraph is an important problem which has many applications in Computer Science. In the present paper, we address this problem and its decisional variant, i.e., the recognition of the transversal hypergraph for another hypergraph. We survey some results on problems which are known to be related to computing the transversal hypergraph, where we focus on problems in propositional Logic and AI. Some of the results have been established already some time ago, and were announced but their derivation was not widely disseminated. We then address recent developments on the computational complexity of computing resp. recognizing the transversal hypergraph. The precise complexity of these problems is not known to date, and is in fact open for more than 20 years now.",
    language = "en",
    urldate = "2019-02-13",
    booktitle = "Logics in {Artificial} {Intelligence}",
    publisher = "Springer Berlin Heidelberg",
    year = "2002",
    doi = "10.1007/3-540-45757-7\_53",
    keywords = "Enumeration",
    pages = "549--564",
    file = "Eiter et Gottlob - 2002 - Hypergraph Transversal Computation and Related Pro.pdf:/home/etudiant/sync/papier/Zotero/storage/9T6MZUVI/Eiter et Gottlob - 2002 - Hypergraph Transversal Computation and Related Pro.pdf:application/pdf"
}

@article{eiter_identifying_1995,
    author = "Eiter, Thomas and Gottlob, Georg",
    title = "Identifying the Minimal Transversals of a Hypergraph and Related Problems",
    volume = "24",
    issn = "0097-5397",
    url = "http://dx.doi.org/10.1137/s0097539793250299",
    doi = "10.1137/s0097539793250299",
    abstract = "The paper considers two decision problems on hypergraphs, hypergraph saturation and recognition of the transversal hypergraph, and discusses their signi cance for several search problems in applied computer science. Hypergraph saturation, i.e., given a hofypHer,girsasphhowHn,todebceidceo-iNf ePve-croymsupblesteet.",
    language = "en",
    number = "6",
    urldate = "2019-02-13",
    journal = "SIAM Journal on Computing",
    month = "December",
    year = "1995",
    keywords = "Enumeration",
    pages = "1278--1304",
    file = "Eiter et Gottlob - 1995 - Identifying the Minimal Transversals of a Hypergra.pdf:/home/etudiant/sync/papier/Zotero/storage/5RDWNWJZ/Eiter et Gottlob - 1995 - Identifying the Minimal Transversals of a Hypergra.pdf:application/pdf;Identifying the Minimal Transversals of a Hypergraph and Related Problems:/home/etudiant/sync/papier/Zotero/storage/ZE35Z843/eiter1995.pdf:application/pdf"
}

@article{eiter_new_nodate,
    author = "Eiter, Thomas and Gottlob, Georg and Makino, Kazuhisa",
    title = "New results on monotone dualization and generating hypergraph transversals",
    abstract = "This paper considers the problem of dualizing a monotone CNF (equivalently, computing all minimal transversals of a hypergraph), whose associated decision problem is a prominent open problem in NP-completeness. We present a number of new polynomial time resp. output-polynomial time results for signiﬁcant cases, which largely advance the tractability frontier and improve on previous results. Furthermore, we show that duality of two monotone CNFs can be disproved with limited nondeterminism (more precisely, in polynomial time with O(log2 n) suitably guessed bits). This result sheds new light on the complexity of this important problem.",
    language = "en",
    keywords = "Enumeration",
    pages = "9",
    file = "Eiter et al. - New Results on Monotone Dualization and Generating.pdf:/home/etudiant/sync/papier/Zotero/storage/3MP6FZG6/Eiter et al. - New Results on Monotone Dualization and Generating.pdf:application/pdf",
    journal = "Proceedings of the thiry-fourth annual ACM symposium on Theory of computing",
    url = "http://dx.doi.org/10.1145/509907.509912",
    year = "2002",
    doi = "10.1145/509907.509912"
}

@article{elbassioni_polynomial-time_2010,
    author = "Elbassioni, Khaled and Rauf, Imran",
    title = {Polynomial-time dualization of $r$-exact hypergraphs with applications in geometry},
    volume = "310",
    issn = "0012-365X",
    url = "http://dx.doi.org/10.1016/j.disc.2010.05.017",
    doi = "10.1016/j.disc.2010.05.017",
    abstract = "Let H⊆2V be a hypergraph on vertex set V. For a positive integer r, we call Hr-exact if any minimal transversal of H intersects any hyperedge of H in at most r vertices. This class includes several interesting examples from geometry, e.g., circular-arc hypergraphs (r=2), hypergraphs defined by sets of axis parallel lines stabbing a given set of α-fat objects (r=4α), and hypergraphs defined by sets of points contained in translates of a given cone in the plane (r=2). For constant r, we give a polynomial-time algorithm for the duality testing problem of a pair of r-exact hypergraphs. This result implies that minimal hitting sets for the above geometric hypergraphs can be generated in output polynomial time.",
    number = "17-18",
    urldate = "2019-02-14",
    journal = "Discrete Mathematics",
    month = "September",
    year = "2010",
    keywords = "Enumeration algorithms, Geometric hitting sets, Hypergraphs, Transversals, Enumeration",
    pages = "2356--2363",
    file = "ScienceDirect Full Text PDF:/home/etudiant/sync/papier/Zotero/storage/U2UVU82K/Elbassioni et Rauf - 2010 - Polynomial-time dualization of r-exact hypergraphs.pdf:application/pdf;ScienceDirect Snapshot:/home/etudiant/sync/papier/Zotero/storage/R4DI5MRA/S0012365X10001779.html:text/html"
}

@incollection{fiat_output-sensitive_2009,
    author = "Elbassioni, Khaled and Makino, Kazuhisa and Rauf, Imran",
    editor = "Fiat, Amos and Sanders, Peter",
    address = "Berlin, Heidelberg",
    title = "Output-{Sensitive} {Algorithms} for {Enumerating} {Minimal} {Transversals} for {Some} {Geometric} {Hypergraphs}",
    volume = "5757",
    isbn = "978-3-642-04127-3 978-3-642-04128-0",
    url = "http://link.springer.com/10.1007/978-3-642-04128-0\_13",
    urldate = "2019-02-14",
    booktitle = "Algorithms - {ESA} 2009",
    publisher = "Springer Berlin Heidelberg",
    year = "2009",
    doi = "10.1007/978-3-642-04128-0\_13",
    keywords = "Enumeration",
    pages = "143--154",
    file = "elbassioni2009.pdf:/home/etudiant/sync/papier/Zotero/storage/YI4B2QNH/elbassioni2009.pdf:application/pdf;Version soumise:/home/etudiant/sync/papier/Zotero/storage/M3NINHB9/Elbassioni et al. - 2009 - Output-Sensitive Algorithms for Enumerating Minima.pdf:application/pdf"
}

@article{khachiyan_computing_2007,
    author = "Khachiyan, Leonid and Boros, Endre and Gurvich, Vladimir and Elbassioni, Khaled",
    title = "COMPUTING MANY MAXIMAL INDEPENDENT SETS FOR HYPERGRAPHS IN PARALLEL",
    volume = "17",
    issn = "0129-6264",
    url = "http://dx.doi.org/10.1142/s0129626407002934",
    doi = "10.1142/s0129626407002934",
    abstract = "A hypergraph is called uniformly δ-sparse if for every nonempty subset X ⊆ V of vertices, the average degree of the sub-hypergraph of induced by X is at most δ. We show that there is a deterministic algorithm that, given a uniformly δ-sparse hypergraph , and a positive integer k, outputs k or all minimal transversals for in O(δlog(1 + k)polylog(δ{\textbar}V{\textbar}))-time using {\textbar}V{\textbar}O(logδ)kO(δ) processors. Equivalently, the algorithm can be used to compute in parallel k or all maximal independent sets for .",
    number = "02",
    urldate = "2019-02-14",
    journal = "Parallel Processing Letters",
    month = "June",
    year = "2007",
    pages = "141--152",
    file = "Computing many maximal independent sets for hypergraphs in parallel:/home/etudiant/sync/papier/Zotero/storage/GJ4GDVDY/khachiyan2007.pdf:application/pdf;Snapshot:/home/etudiant/sync/papier/Zotero/storage/ND54I72D/S0129626407002934.html:text/html"
}

@incollection{goos_generating_2004,
    author = "Boros, Endre and Elbassioni, Khaled and Gurvich, Vladimir and Khachiyan, Leonid",
    editor = "Goos, Gerhard and Hartmanis, Juris and van Leeuwen, Jan and Farach-Colton, Martín",
    address = "Berlin, Heidelberg",
    title = "Generating {Maximal} {Independent} {Sets} for {Hypergraphs} with {Bounded} {Edge}-{Intersections}",
    volume = "2976",
    isbn = "978-3-540-21258-4 978-3-540-24698-5",
    url = "http://link.springer.com/10.1007/978-3-540-24698-5\_52",
    urldate = "2019-02-14",
    booktitle = "{LATIN} 2004: {Theoretical} {Informatics}",
    publisher = "Springer Berlin Heidelberg",
    year = "2004",
    doi = "10.1007/978-3-540-24698-5\_52",
    pages = "488--498",
    file = "Version soumise:/home/etudiant/sync/papier/Zotero/storage/TEKVZ7NC/Boros et al. - 2004 - Generating Maximal Independent Sets for Hypergraph.pdf:application/pdf"
}

@article{elbassioni_global_2019,
    author = "Elbassioni, Khaled and Rauf, Imran and Ray, Saurabh",
    title = "A global parallel algorithm for enumerating minimal transversals of geometric hypergraphs",
    volume = "767",
    issn = "0304-3975",
    url = "http://dx.doi.org/10.1016/j.tcs.2018.09.027",
    doi = "10.1016/j.tcs.2018.09.027",
    language = "en",
    urldate = "2023-12-15",
    journal = "Theoretical Computer Science",
    month = "May",
    year = "2019",
    pages = "26--33",
    file = "Elbassioni et al. - 2019 - A global parallel algorithm for enumerating minima.pdf:/home/etudiant/sync/papier/Zotero/storage/5DEMW3JK/Elbassioni et al. - 2019 - A global parallel algorithm for enumerating minima.pdf:application/pdf"
}

@article{khachiyan_dualization_2007,
    author = "Khachiyan, Leonid and Boros, Endre and Elbassioni, Khaled and Gurvich, Vladimir",
    title = "On the dualization of hypergraphs with bounded edge-intersections and other related classes of hypergraphs",
    volume = "382",
    issn = "0304-3975",
    url = "http://dx.doi.org/10.1016/j.tcs.2007.03.005",
    doi = "10.1016/j.tcs.2007.03.005",
    abstract = "Given a ﬁnite set V , and integers k ≥ 1 and r ≥ 0, let us denote by A(k, r ) the class of hypergraphs A ⊆ 2V with (k, r )bounded intersections, i.e. in which the intersection of any k distinct hyperedges has size at most r . We consider the problem MIS(A, I): given a hypergraph A, and a subfamily I ⊆ I(A) of its maximal independent sets (MIS) I(A), either extend this subfamily by constructing a new MIS I ∈ I(A) {\textbackslash} I or prove that there are no more MIS, that is I = I(A). It is known that, for hypergraphs of bounded dimension A(1, δ), as well as for hypergraphs of bounded degree A(δ, 0) (where δ is a constant), problem MIS(A, I) can be solved in incremental polynomial time. In this paper, we extend this result to any integers k, r such that k + r = δ is a constant. More precisely, we show that for hypergraphs A ∈ A(k, r ) with k + r ≤ const, problem MIS(A, I) is NC-reducible to the problem MIS(A , ∅) of generating a single MIS for a partial subhypergraph A of A. In particular, this implies that MIS(A, I) is polynomial, and we get an incremental polynomial algorithm for generating all MIS. Furthermore, combining this result with the currently known algorithms for ﬁnding a single maximally independent set of a hypergraph, we obtain efﬁcient parallel algorithms for incrementally generating all MIS for hypergraphs in the classes A(1, δ), A(δ, 0), and A(2, 1), where δ is a constant. We also show that, for A ∈ A(k, r ), where k + r ≤ const, the problem of generating all MIS of A can be solved in incremental polynomial-time and with space polynomial only in the size of A.",
    language = "en",
    number = "2",
    urldate = "2023-12-15",
    journal = "Theoretical Computer Science",
    month = "August",
    year = "2007",
    pages = "139--150",
    file = "Khachiyan et al. - 2007 - On the dualization of hypergraphs with bounded edg.pdf:/home/etudiant/sync/papier/Zotero/storage/3SDL2GCX/Khachiyan et al. - 2007 - On the dualization of hypergraphs with bounded edg.pdf:application/pdf"
}

@article{boros_efficient_2000,
    author = "Boros, E. and Gurvich, V. and Elbassioni, K. and Khachiyan, L.",
    title = "An efficient incremental algorithm for generating all maximal independent sets in hypergraphs of bounded dimension",
    volume = "10",
    issn = "0129-6264",
    url = "http://dx.doi.org/10.1142/s0129626400000251",
    doi = "10.1142/s0129626400000251",
    abstract = "We show that for hypergraphs of bounded edge size, the problem of extending a given list of maximal independent sets is NC-reducible to the computation of an arbitrary maximal independent set for an induced sub-hypergraph. The latter problem is known to be in RNC. In particular, our reduction yields an incremental RNC dualization algorithm for hypergraphs of bounded edge size, a problem previously known to be solvable in polynomial incremental time. We also give a similar parallel algorithm for the dualization problem on the product of arbitrary lattices which have a bounded number of immediate predecessors for each element.",
    language = "en",
    number = "04",
    urldate = "2023-12-15",
    journal = "Parallel Processing Letters",
    month = "December",
    year = "2000",
    pages = "253--266"
}

@book{berge_hypergraphs_1989,
    author = "Berge, Claude",
    address = "Amsterdam ; New York",
    series = "North-{Holland} mathematical library",
    title = "Hypergraphs: combinatorics of finite sets",
    isbn = "978-0-444-87489-4",
    shorttitle = "Hypergraphs",
    language = "en",
    number = "v. 45",
    publisher = "North Holland : Distributors for the U.S.A. and Canada, Elsevier Science Pub. Co",
    year = "1989",
    keywords = "Hypergraphs",
    file = "Berge - 1989 - Hypergraphs combinatorics of finite sets.pdf:/home/etudiant/sync/papier/Zotero/storage/3NF2ZSIS/Berge - 1989 - Hypergraphs combinatorics of finite sets.pdf:application/pdf"
}

@inproceedings{mishra_generating_1997,
    author = "Mishra, Nina and Pitt, Leonard",
    address = "New York, NY, USA",
    series = "{COLT} '97",
    title = "Generating all maximal independent sets of bounded-degree hypergraphs",
    isbn = "978-0-89791-891-6",
    url = "https://dl.acm.org/doi/10.1145/267460.267500",
    doi = "10.1145/267460.267500",
    urldate = "2023-12-15",
    booktitle = "Proceedings of the tenth annual conference on {Computational} learning theory",
    publisher = "Association for Computing Machinery",
    month = "July",
    year = "1997",
    pages = "211--217",
    file = "Full Text PDF:/home/etudiant/sync/papier/Zotero/storage/IJXGH5DA/Mishra et Pitt - 1997 - Generating all maximal independent sets of bounded.pdf:application/pdf"
}

@inproceedings{gunopulos_data_1997,
    author = "Gunopulos, Dimitrios and Mannila, Heikki and Khardon, Roni and Toivonen, Hannu",
    address = "Tucson, Arizona, United States",
    title = "Data mining, hypergraph transversals, and machine learning (extended abstract)",
    isbn = "978-0-89791-910-4",
    url = "http://portal.acm.org/citation.cfm?doid=263661.263684",
    doi = "10.1145/263661.263684",
    abstract = {Several data mining problems can be formulated as problems of nding maximally speci c sentences that are interesting in a database. We rst show that this problem has a close relationship with the hypergraph transversal problem. We then analyze two algorithms that have been previously used in data mining, proving upper bounds on their complexity. The rst algorithm is useful when the maximally speci c interesting sentences are {\textbackslash}small". We show that this algorithm can also be used to e ciently solve a special case of the hypergraph transversal problem, improving on previous results. The second algorithm utilizes a subroutine for hypergraph transversals, and is applicable in more general situations, with complexity close to a lower bound for the problem. We also relate these problems to the model of exact learning in computational learning theory, and use the correspondence to derive some corollaries.},
    language = "en",
    urldate = "2023-12-18",
    booktitle = "Proceedings of the sixteenth {ACM} {SIGACT}-{SIGMOD}-{SIGART} symposium on {Principles} of database systems - {PODS} '97",
    publisher = "ACM Press",
    year = "1997",
    pages = "209--216",
    file = "Gunopulos et al. - 1997 - Data mining, hypergraph transversals, and machine .pdf:/home/etudiant/sync/papier/Zotero/storage/ZEESLPQG/Gunopulos et al. - 1997 - Data mining, hypergraph transversals, and machine .pdf:application/pdf"
}

@article{vapnik_uniform_1971,
    author = "Vapnik, V. N. and Chervonenkis, A. Ya.",
    title = "On the Uniform Convergence of Relative Frequencies of Events to Their Probabilities",
    volume = "16",
    issn = "0040-585X",
    url = "http://dx.doi.org/10.1137/1116025",
    doi = "10.1137/1116025",
    number = "2",
    urldate = "2023-12-19",
    journal = "Theory of Probability \&amp; Its Applications",
    month = "January",
    year = "1971",
    note = "Publisher: Society for Industrial and Applied Mathematics",
    pages = "264--280"
}

@article{10.1145/3173127.3173138,
author = {Elbassioni, Khaled and Dumitrescu, Adrian},
title = {Computational Geometry Column 66},
year = {2017},
issue_date = {December 2017},
publisher = {Association for Computing Machinery},
address = {New York, NY, USA},
volume = {48},
number = {4},
issn = {0163-5700},
url = {https://doi.org/10.1145/3173127.3173138},
doi = {10.1145/3173127.3173138},
abstract = {The list update problem is a well studied online problem in the area of self-adjusting data structures. Understanding the o?ine version of this problem is crucial because of the role it plays in the competitive analysis of online list update algorithms. In this paper we settle a long-standing open problem by showing that the o?ine list update problem is NP-hard.},
journal = {SIGACT News},
month = {dec},
pages = {57–74},
numpages = {18},
keywords = {affine degeneracy, enumeration, extremal determinant, hypergraph transversal, isoperimetric inequality, linear extension, linear programming, polytope, poset, rounding, vertex}
}

@article{doi:10.1137/18M1198995,
author = {Elbassioni, Khaled and Makino, Kazuhisa},
title = {Enumerating Vertices of Covering Polyhedra with Totally Unimodular Constraint Matrices},
journal = {SIAM Journal on Discrete Mathematics},
volume = {34},
number = {1},
pages = {843-864},
year = {2020},
doi = {10.1137/18M1198995},

URL = { 
    
        https://doi.org/10.1137/18M1198995
    
    

},
eprint = { 
    
        https://doi.org/10.1137/18M1198995
    
    

}
,
    abstract = { We give an incremental polynomial time algorithm for enumerating the vertices of any polyhedron \$P=P(A,\b1)=\{x\in \mathbb{R}^n \mid Ax\geq \b1,~x\geq \b0\}\$, when \$A\$ is a totally unimodular matrix. Our algorithm is based on decomposing the hypergraph transversal problem for unimodular hypergraphs using Seymour's decomposition of totally unimodular matrices and may be of independent interest. }
}

\end{document}